\newtheorem{thm}{Theorem}[section]
\newtheorem{que}{Question}
\newtheorem{lem}[thm]{Lemma}
\theoremstyle{definition}
\newtheorem{defn}{Definition}[section]
\theoremstyle{remark}
\newtheorem{rem}{Remark}[section]
\newtheorem{exm}{Example}[section]
\newcommand{\be}{\begin{equation}}
\newcommand{\ee}{\end{equation}}
\newcommand{\bea}{\begin{eqnarray}}
\newcommand{\eea}{\end{eqnarray}}
\newcommand{\ben}{\begin{eqnarray*}}
\newcommand{\een}{\end{eqnarray*}}
\newcommand{\bt}{\begin{split}}
\newcommand{\et}{\end{split}}
\newcommand{\bet}{\begin{equation}}
\newcommand{\mc}{\mathbb{C}}
\newcommand{\mr}{\mathbb{R}}
\newcommand{\ra}{\rightarrow}
\newcommand{\bi}{\begin{itemize}}
\newcommand{\ei}{\end{itemize}}
\begin{document}

\title[Exposing boundary points]{Exposing boundary points of strongly pseudoconvex subvarieties in complex spaces}
\date{}
\author[F. Deng, J. E. Forn{\ae}ss, E. F. Wold]{F. Deng, J. E. Forn{\ae}ss, and E. F. Wold}
\address{F. Deng: Matematisk Institutt, Universitetet i Oslo, Postboks 1053 Blindern, 0316 Oslo, Norway;
 and School of Mathematical Sciences, University of Chinese Academy of Sciences, Beijing 100049, China}
\email{fushengd@math.uio.no}
\address{J. E. Forn{\ae}ss: Department for Mathematical Sciences, Norwegian University of Science
and Technology, Trondheim, Norway}
\email{john.fornass@math.ntnu.no}
\address{E. F. Wold: Matematisk Institutt, Universitetet i Oslo, Postboks 1053 Blindern, 0316 Oslo, Norway}
\email{erlendfw@math.uio.no}

\begin{abstract}
We prove that all locally exposable points in a Stein compact
in a complex space can be exposed along a given curve to a given real hypersurface.
Moreover, the exposing map for a boundary point can be sufficiently close to the identity map
outside any fixed neighborhood of the point.
We also prove a parametric version of this result
for bounded strongly pseudoconvex domains in $\mc^n$.
For a bounded strongly pseudoconvex domain in $\mc^n$ and a given boundary point of it,
we prove that there is a global coordinate change on the closure of the domain
which is arbitrarily close to the identity map with respect to the $C^1$-norm
and maps the boundary point to a strongly convex boundary point.
\end{abstract}

\maketitle

\section{Introduction}\label{sec:introduction}
Let $X$ be a complex space.
We assume throughout this paper that
all complex spaces are reduced, irreducible, and paracompact.
Let $X_{\mathrm{sing}}$ be the set of singular points of $X$
and let $X_{\mathrm{reg}}=X\backslash X_{\mathrm{sing}}$
be the set of smooth points of $X$.

\begin{defn}
Let $X$ be a complex space, let $K\subset X$ be a compact set, and let $\zeta\in K$
be a point in $X_{\mathrm{reg}}$.   We will say that $\zeta$ is \emph{locally exposable} if
there exists an open set $U\subset X$ containing $\zeta$ and $\rho$ a $C^2$-smooth strictly plurisubharmonic
function on $U$ such that
\begin{itemize}
\item[(i)] $\rho(\zeta)=0$ and $\mathrm{d}\rho(\zeta)\neq 0$, and
\item[(ii)] $\rho<0$ on $(K\cap U)\setminus\{\zeta\}$.
\end{itemize}
\end{defn}

Our main concern in this paper is to show that locally exposable points are globally
exposable (see Definition \ref{exposable}).

%

The first result of the present paper is the following.

\begin{thm}\label{thm:exposing manifolds}
Let $X$ be a complex space, let $K\subset X$ be a Stein compact,
and let $\zeta\in K\cap X_{\mathrm{reg}}$ be locally exposable.
Let $H\subset X\backslash (K\cup X_{\mathrm{sing}})$ be a germ of a closed real $C^2$-smooth hypersurface  in $X$.
Let $\gamma:[0,1]\rightarrow X_{\mathrm{reg}}$
be a smoothly embedded curve in $X$ with $\gamma(0)=\zeta$,
$\gamma(1)\in H$,
and $\gamma(t)\in X\backslash (K\cup H)$ for $t\in (0, 1)$. \

Then for any (small) neighborhood $V$ of $\gamma$, any neighborhood $V'\subset V$ of $\zeta$, and any $\epsilon>0$,
there exist an open neighborhood $U$ of $K$,
and a biholomorphic map $f:U\rightarrow f(U)\subset X$
such that the following holds:

\begin{itemize}
\item[(a)] $f(V')\subset V$ and $f(\zeta)=\gamma(1)$,
\item[(b)] $\mathrm{dist}(f(z),z)<\epsilon$ for $z\in K\backslash V'$, and
\item[(c)] $f(K)\cap H=\{\gamma(1)\}$.
\end{itemize}
where $dist(\cdot, \cdot)$ is a fixed distance on $X$.
In the case $X=\mc^n$ and $K\subset\mc^n$ is polynomially convex,
$f$ can be taken to be a holomorphic automorphism of $\mc^n$.
\end{thm}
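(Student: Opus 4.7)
The plan is to construct $f$ in two stages: first produce an explicit exposing biholomorphism on a neighborhood of $K\cup\gamma$, and then approximate it by a biholomorphism defined near $K$ alone. For the local setup I would choose holomorphic coordinates $(z_1,\dots,z_n)$ near $\zeta\in X_{\mathrm{reg}}$ (available since $\zeta$ is a smooth point) and, using the strictly plurisubharmonic local exposing function $\rho$ together with a quadratic holomorphic change of variables, bring $\rho$ into the normal form $\rho(z)=\mathrm{Re}(z_1)+\sum_j|z_j|^2+O(|z|^3)$. Then $K\cap U_\zeta\subset\{\mathrm{Re}(z_1)\le 0\}$ with equality only at $\zeta=0$, and the translation $z\mapsto z+se_1$ with $s>0$ small is a local exposing map: it moves $\zeta$ off $K$ while keeping the image of $K\cap U_\zeta$ to one side of the translated strictly convex hypersurface.

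Next I would propagate this local exposing along the curve $\gamma$. Thickening $\gamma$ to a Stein tubular neighborhood $T\subset X_{\mathrm{reg}}$ and choosing a holomorphic trivialization, I construct a holomorphic isotopy $\{F_t\}_{t\in[0,1]}$ on a tube $T_t\supset\gamma([0,t])$ (possibly shrinking with $t$), with $F_0=\mathrm{id}$, $F_1(\zeta)=\gamma(1)$, and with $F_t$ being the translation from the local model transported along $\gamma$. Provided $\gamma$ is chosen transverse to $H$ at $\gamma(1)$ (which we may arrange by isotoping $\gamma$ rel endpoints inside $V$), the endpoint map $F_1$ satisfies $F_1(K\cap U_\zeta)\cap H=\{\gamma(1)\}$.

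The decisive step is to descend from $K\cup\gamma$ to a neighborhood of $K$. The compact $L:=K\cup\gamma$ is a Stein compact (a Stein compact with an attached embedded arc). Define a candidate map $g$ on a neighborhood of $L$ by setting $g=F_1$ on a small tube around $\gamma$, $g=\mathrm{id}$ on $K\setminus V'$, and interpolating holomorphically in an annular region inside $V'\cap U_\zeta$ via a Cartan-type splitting applied to the small perturbation $F_1-\mathrm{id}$. Oka--Weil approximation on the Stein compact $L$ then produces a holomorphic map $f$ on a Stein neighborhood $U$ of $K$; for sufficiently accurate approximation $f$ is injective on $U$ (hence biholomorphic onto its image) and close enough to $g$ to satisfy (b) and (c), with a small correction by the implicit function theorem ensuring (a). In the case $X=\mc^n$ with $K$ polynomially convex, one arranges $K\cup\gamma$ to be polynomially convex and invokes the Andersén--Lempert theorem to upgrade $f$ to a holomorphic automorphism of $\mc^n$ approximating it on $K$.

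The main obstacle is the final gluing step: constructing a genuinely biholomorphic $f$ (not merely a holomorphic map close to identity) while simultaneously respecting the sharp condition (c). Both the Cartan splitting in the overlap region and the subsequent Oka--Weil approximation on $L$ must be quantitative enough to keep $f(K)$ strictly on the correct side of $H$ and to avoid spurious intersections with $H$ away from $\gamma(1)$; this is delicate because the tube around $\gamma$ and the shell around $\partial V'$ interact near $\zeta$, where $\rho$ degenerates.
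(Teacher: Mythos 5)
Your proposal correctly identifies the overall architecture — coordinate normalization near $\zeta$, gluing a locally defined exposing map to the identity via a Cartan/Forstneri\v{c}-type splitting, and upgrading to automorphisms via Anders\'{e}n--Lempert theory in the polynomially convex case — but it is missing the central technical ingredient, and this gap is fatal to the argument as written.

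The difficulty is in the nature of your local exposing map. You propose a translation $z\mapsto z+se_1$ (and its propagation along $\gamma$ as a flow), and then plan to split ``the small perturbation $F_1-\mathrm{id}$'' on an overlap region near $\partial V'$. But $F_1$ moves $\zeta$ a macroscopic distance along $\gamma$, so on a tube around $\gamma$ the map $F_1$ is uniformly far from the identity; $F_1-\mathrm{id}$ is not small anywhere, least of all on the shell around $\partial V'$ where you want to glue. Forstneri\v{c}'s splitting lemma (Lemma \ref{lem:gluing tech}, or its parametric version) only produces a decomposition $\gamma=\beta\circ\alpha^{-1}$ when $\gamma$ is $C^0$-close to the identity on the overlap. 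A translation (or composition of translations covering a fixed distance) never satisfies this, no matter how the overlap is placed. What is actually needed is a \emph{one-variable} holomorphic map on a disc (hence a map on a ball via $(z',z_n)\mapsto(z',f_\nu(z_n))$) that (a) converges to the identity uniformly on the complement of any small neighborhood of the boundary point $p_1$, (b) sends $p_1$ to a far-away target, and (c) keeps the image of the small neighborhood of $p_1$ inside a prescribed thin dumbbell. This is precisely Theorem \ref{thm:exposing ball case}, proved via the Riemann map for the family of dumbbell domains $\Omega_\nu$ in Lemma \ref{lem:Riemann map convergence}: the conformal map of the disc onto $D_1(a)\cup l\cup D_1(b)$ with a shrinking neck concentrates the inverse image of the distal ball in an arbitrarily small cap near $1\in\partial\mathbb D$, which is exactly the ``close to identity outside $B_\epsilon(p_1)$ but stretches $p_1$ far'' behavior your translation cannot deliver. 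The paper's proof then applies the splitting lemma to these maps $\tilde\phi_\nu$ (close to the identity on $C_\tau$), not to a translation, and separately uses a near-identity embedding $\psi$ (Lemma for $K'\cup\gamma_\zeta$, built by Mergelyan on the totally real arc plus another splitting) to transport the small attached ball along $\gamma$ to $H$.

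Two further, smaller issues. First, ``Oka--Weil approximation on $K\cup\gamma$ produces a holomorphic map on a Stein neighborhood of $K$'' does not by itself yield injectivity or the sharp tangency in (c); the paper avoids this by keeping every step either a genuine biholomorphism or a splitting with quantitative control, and by arranging $\gamma$ to meet $H$ perpendicularly so a small enclosed ball $B_\delta(p_{r-\delta})$ is tangent to $H$, which is what makes (c) a robust open condition. Second, the polynomially convex case requires the one-parameter family $\phi_{\nu,t}$ with polynomially convex images $\phi_{\nu,t}(\overline{\mathbb B^n})$ for all $t$ (item (vi) of Theorem \ref{thm:exposing ball case}) in order to run the isotopy version of Anders\'{e}n--Lempert; asserting that ``one arranges $K\cup\gamma$ to be polynomially convex'' is not enough, since the exposing map must be connected to the identity through maps with polynomially convex images. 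These refinements, like the main gap, trace back to the missing dumbbell construction.
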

By saying that $K$ is a Stein compact we mean that $K$ has a Stein neighbourhood
basis in which $K$ is holomorphically convex.  \

\begin{defn}\label{exposable}
The map $f$ will be said to \emph{expose} the point $\zeta$ with respect to $H$, and
$f(\zeta)$ is said to be exposed.  If $K\subset B_r(0)\subset\mathbb C^n$ and $H=\{z\in\mathbb C^n:\|z\|=R\}$ we
will say that $f(\zeta)$ is globally exposed.
\end{defn}

From the proof, we will see that Theorem \ref{thm:exposing manifolds} can be generalized in various directions as follows:
\bi
\item one can easily expose finitely many points simultaneously,
\item the last statement can be generalized as: if $X$ is a Stein manifold  and has density property (for definition see \cite{Varolin01})
      and $K$ is $\mathcal O(X)$-convex, then $f$ can be taken in $\mathrm{Aut_{hol}}X$,
\item if $X$ is a 1-convex space,
      the same statement still holds if we assume that $\zeta$ is outside the exceptional set.
      This can be proved by Remmert's reduction and interpolation for the exposing maps constructed in Theorem \ref{thm:exposing manifolds}.
      (Exposing boundary points in this setting was proposed by Franc Forstneri\v{c} in \cite{Forstneric13}.)
\ei

The importance of Theorem \ref{thm:exposing manifolds} in the case that $K$ is the closure of a domain with a strictly
pseudoconvex boundary point $\zeta$, is that it tells us that $K^\circ$ resembles a strictly convex domain in a concrete
geometric sense.  Convex domains are exceptionally well behaved from a complex analysis point of view.  As
a comparison, the Levi problem was to show that such a domain $K^\circ$ resembles a strictly convex domain
in a much weaker function theoretic sense.

%
%

Our next result concerns in which ways a locally exposable point can be locally exposed with global maps.
It is quite simple using Anders\'{e}n-Lempert theory to show that a locally defined exposing map in $\mathbb C^n$ can be
approximated by holomorphic automorphisms, but as a consequence one looses completely control
of the behaviour on most of $K$.

\begin{thm}\label{thm:local expose}
Let $D$ be a bounded strongly pseudoconvex domain in $\mathbb{C}^n$ with $C^2$-smooth boundary.
Then for any $\zeta\in\partial D$ and any $\epsilon >0$,
there is an injective holomorphic map $F:\overline D\rightarrow \mathbb{C}^n$
such that $F(\zeta)=\zeta$ is a strictly convex boundary point of $F(D)$,
and $||F-Id||_{C^1(\overline D)}<\epsilon$.
\end{thm}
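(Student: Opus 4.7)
The plan is to reduce the problem to a local biholomorphic convexification of $\partial D$ at $\zeta$, and then to globalize by a cut-off together with a $\bar\partial$-correction, while keeping the $C^1$-norm small on $\overline D$.

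I would first produce a local polynomial biholomorphism $\phi$, defined near $\zeta$, with $\phi(\zeta)=\zeta$ and $d\phi(\zeta)=\mathrm{Id}$, such that $\zeta$ is a strictly convex boundary point of $\phi(D)$ locally. This is a Levi-normal-form computation: after a unitary rotation sending the complex normal at $\zeta$ to $\partial/\partial z_1$ and replacing the defining function $\rho$ by $\rho+C\rho^2$ for $C$ large (which enlarges $\partial^2\rho/\partial z_1\partial\bar z_1$ until the full complex Hessian at $\zeta$ is positive definite on $\mathbb C^n$, not merely on the complex tangent space), the holomorphic quadratic part of $\rho$ is absorbed by $\phi(z)=z+(q(z-\zeta),0,\dots,0)$ with $q$ the holomorphic quadratic of the modified defining function. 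The new local defining function becomes $2\mathrm{Re}(w_1)+H(w,\bar w)+O(|w|^3)$ with $H$ a positive definite Hermitian form, so the real Hessian on the full real tangent space at $\zeta$ is positive definite and $\zeta$ is strictly convex for $\phi(D)$ locally. Since $\phi-\mathrm{Id}$ vanishes to order $2$ at $\zeta$, $\|\phi-\mathrm{Id}\|_{C^1(B(\zeta,r))}=O(r)$.

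Then I would globalize by patching $\phi$ to the identity. Fix a strongly pseudoconvex neighborhood $D'\supset\supset\overline D$ and a smooth radial cut-off $\chi$ with $\chi\equiv 1$ on $B(\zeta,r/2)$ and $\chi\equiv 0$ off $B(\zeta,r)$, and set $G:=\mathrm{Id}+\chi(\phi-\mathrm{Id})$. Then $G$ equals $\phi$ near $\zeta$, equals the identity off $B(\zeta,r)$, and $\|G-\mathrm{Id}\|_{C^1(\overline D)}=O(r)$; the obstruction $\bar\partial G=(\bar\partial\chi)(\phi-\mathrm{Id})$ is supported in the shell $\{r/2\le|z-\zeta|\le r\}$ with $\|\bar\partial G\|_{L^\infty}=O(r)$ and $\|\bar\partial G\|_{C^1}=O(1)$. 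Applying the Henkin integral solution operator on $D'$ (which gains $1/2$ H\"older derivative, i.e. $\|u\|_{C^{k+1/2}}\le C\|v\|_{C^k}$) and interpolating yields $u$ with $\|u\|_{C^1(\overline D)}=O(r^{1/2})$. Putting $F_0:=G-u$ gives a holomorphic map on a neighborhood of $\overline D$ with $\|F_0-\mathrm{Id}\|_{C^1}=O(r^{1/2})$, injective for $r$ small. Moreover, $u$ is holomorphic on $B(\zeta,r/2)$, and the integral representation with support of $\bar\partial G$ at distance $\ge r/2$ from $\zeta$ gives the sharper pointwise bounds $|u(\zeta)|=O(r^2)$, $|Du(\zeta)|=O(r)$, $|D^2u(\zeta)|=O(1)$. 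Subtracting the degree-$1$ Taylor polynomial of $u$ at $\zeta$ produces a modified $\bar\partial$-solution $u'$ with vanishing $1$-jet at $\zeta$ and $\|u'\|_{C^1(\overline D)}=O(r^{1/2})$; setting $F:=G-u'$ then gives $F(\zeta)=\zeta$, $dF(\zeta)=\mathrm{Id}$, and $\|F-\mathrm{Id}\|_{C^1(\overline D)}<\epsilon$ for $r$ small.

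The main obstacle is the preservation of strict convexity at $\zeta$: the real Hessian of $F(\partial D)$ at $\zeta$ differs from that of $\phi(\partial D)$ by the contribution $D^2u(\zeta)$, which is bounded by a universal constant but does not decay as $r\to 0$. I would handle this by choosing the convexification constant $C$ in the first step large enough that the minimum eigenvalue of the positive definite Hessian of $\phi(\partial D)$ at $\zeta$ (which grows like $C$) strictly dominates the universal bound on $|D^2u(\zeta)|$; the consequently enlarged coefficients of the polynomial $\phi$ are then compensated in the globalization step by taking $r$ correspondingly small, so that the final map $F$ retains both $\|F-\mathrm{Id}\|_{C^1(\overline D)}<\epsilon$ and the positive definite Hessian of $F(\partial D)$ at $\zeta$, hence strict convexity of the boundary of $F(D)$ at $\zeta$.
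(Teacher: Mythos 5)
Your strategy is genuinely different from the paper's: you propose a local Levi normal form $\phi$, a cut-off $\chi$ producing $G=\mathrm{Id}+\chi(\phi-\mathrm{Id})$, and a Henkin $\bar\partial$-correction $u$, whereas the paper constructs the exposing map directly as a globally holomorphic perturbation $\tilde z_n=z_n+\varphi_M$ with $\varphi_M=\sum_{j=1}^M\frac1M Qf^{N_j}$, $f$ being a peak function with the quantitative estimate $|f(z)|\le e^{-c\|z-\zeta\|^2}$ from Lemma~\ref{lem:estimate peak ftn}. Because $\varphi_M$ is already holomorphic, the paper never has to patch or solve $\bar\partial$, and the $2$-jet of $\varphi_M$ at $\zeta$ agrees \emph{exactly} with that of $Q$ since $f(\zeta)=1$. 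The delicate point there is that $\|(Qf^{N})'\|$ has a bump of size $\sim\frac1{ce}$ concentrated near $\|z-\zeta\|^2\sim\frac1{Nc}$ that does not shrink as $N\to\infty$, and the paper's essential trick is to spread $M$ such bumps over disjoint annuli and average with weights $\frac1M$ so the total derivative stays below $\epsilon$.

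Your argument has a genuine gap at exactly the step you yourself flag as ``the main obstacle''. You correctly note that $|D^2u(\zeta)|$ is bounded but does not decay as $r\to 0$, and your proposed remedy---increase the convexification constant $C$---does not work, for two compounding reasons. First, the minimum eigenvalue of the complex Hessian of $\rho+C\rho^2$ at $\zeta$ does \emph{not} grow like $C$: the term $C\rho^2$ contributes $2C\,\partial\rho\otimes\bar\partial\rho$ at $\zeta$, a rank-one form supported in the complex normal direction, which leaves the Levi form on $T^{\mathbb C}_\zeta\partial D$ untouched. Hence as $C\to\infty$ the smallest eigenvalue of the Hessian saturates at the smallest Levi eigenvalue $\lambda_{\min}>0$, a fixed quantity. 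Second, the bound on $|D^2u(\zeta)|$ is \emph{not} independent of $C$: the holomorphic quadratic $q$ that $\phi$ absorbs acquires a term $\sim C z_1^2$ from $C\rho^2$, so $|\phi-\mathrm{Id}|\sim Cr^2$ on the cut-off shell, giving $\|\bar\partial G\|_\infty\sim Cr$; the kernel scaling $|D^2_\zeta K(\zeta,w)|\sim|w-\zeta|^{-(2n+1)}$ over the shell of volume $\sim r^{2n}$ then yields $|D^2u(\zeta)|=O(C)$, which grows linearly in $C$. So the margin stays fixed while the error grows, and the proposed domination fails. Subtracting the degree-$2$ Taylor polynomial of $u$ at $\zeta$ instead would destroy the $C^1$-smallness, because the quadratic coefficient of that polynomial is $O(C)$ and contributes $O(C)$ to the $C^1$-norm on $\overline D$. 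As it stands, your construction produces a holomorphic map $F$ with $\|F-\mathrm{Id}\|_{C^1(\overline D)}<\epsilon$ and $F(\zeta)=\zeta$, $dF(\zeta)=\mathrm{Id}$, but you have not shown that $F(D)$ is strictly convex at $\zeta$; supplying a mechanism to suppress $D^2u(\zeta)$ (or, as the paper does, avoiding the $\bar\partial$-problem entirely via the peak-function averaging) is exactly the missing idea.
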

In  \S \ref{sec:local transf} we will construct a strongly pseudoconvex domain
such that the map $F$ in Theorem \ref{thm:local expose}
can not be taken in $\mathrm{Aut_{hol}}(\mc^n)$ for some boundary points. \\

Furthermore we are interested in parametric version of the previous two theorems.

\begin{thm}\label{thm:exposing family manifolds}
Let $D\subset\subset\mc^n$ be a strongly pseudoconvex domain with smooth boundary.
Let $H\subset\mc^n$ be a smooth closed real hypersurface with $\overline D\cap H=\emptyset$.
Let $\gamma:\partial D\times [0,1]\rightarrow \partial D\times \mc^n$ be a fiber-preserving
continuous map such that for all $\zeta\in\partial D$:
\begin{itemize}
\item[1)] $\gamma_\zeta=\gamma(\zeta, \cdot):I\rightarrow \mc^n$ is a smooth embedding;
\item[2)] $\gamma_\zeta(0)=\zeta, \gamma_\zeta(1)\in H$ and $\gamma_\zeta|_{(0,1)}\subset \mc^n\backslash (\overline D\cup H)$.
\end{itemize}
Then for any neighborhoods $V$ of $\gamma(\partial D\times [0,1])$
and any sufficiently small neighborhood $V'\subset V$ of $\{(\zeta,\zeta);\zeta\in\partial D\}$ in $\partial D\times \mc^n$
and any $\epsilon >0$,
there exists a smooth fiber-preserving map $f:\partial D\times \overline D\rightarrow \partial D\times \mc^n$
such that the following holds for each $\zeta\in\partial D$:

\begin{itemize}
\item[(a)] $f_\zeta:=f(\zeta, \cdot):\overline D\rightarrow \mc^n$ is injective and holomorphic,
\item[(b)] $f_\zeta(V'_\zeta)\subset V_\zeta$ and $f_\zeta(\zeta,\zeta)=\gamma_\zeta(1)$,
\item[(c)] $||f_\zeta(z)- z||<\epsilon$ for $z\in\overline D\backslash V'_\zeta$,
\item[(d)] $f_\zeta(\overline D)\cap H=\{\gamma_\zeta(1)\}$,
where $V_\zeta:=V\cap(\{\zeta\}\times\mc^n)$.
\end{itemize}
If in addition $\overline D$ is polynomially convex,
we can take $f$ a smooth map from $\partial D\times\mc^n$ to itself
such that $f|_{\mc^n_\zeta}\in Aut(\mc^n)$ for all $\zeta\in\partial D$.
\end{thm}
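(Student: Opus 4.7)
The plan is to establish Theorem \ref{thm:exposing family manifolds} by running the construction used for Theorem \ref{thm:exposing manifolds} in a parameter-dependent way, and then globalizing via a parametric approximation step. Since the non-parametric theorem already produces the desired exposing map for each individual $\zeta\in\partial D$, the issue is arranging smooth dependence on $\zeta$; this should be possible because $\partial D$ is compact and the input data $\gamma_\zeta$, $H$, $V$, $V'$ all vary continuously in $\zeta$.

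First I would construct a smooth family of local exposing maps. Using compactness of $\partial D$ and a partition of unity, choose a smoothly varying tubular neighbourhood $N_\zeta$ of $\gamma_\zeta([0,1])$ inside $V$, together with biholomorphisms $\Phi_\zeta:N_\zeta\to\Phi_\zeta(N_\zeta)\subset\mc^n$ depending smoothly on $\zeta$ that straighten $\gamma_\zeta$ to a fixed line segment. In the straightened picture, build a local biholomorphism $g_\zeta$ defined near the segment that fixes it set-wise, sends $\Phi_\zeta(\zeta)$ to $\Phi_\zeta(\gamma_\zeta(1))$, is $\epsilon$-close to the identity outside a small neighbourhood of $\Phi_\zeta(\zeta)$, and makes $g_\zeta(\Phi_\zeta(\overline D\cap N_\zeta))$ meet $\Phi_\zeta(H)$ transversally at the single point $\Phi_\zeta(\gamma_\zeta(1))$. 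This is the same one-variable Riemann-map surgery carried out slice-wise along the curve that is used for Theorem \ref{thm:exposing manifolds}; smooth dependence on $\zeta$ is automatic from the smoothness of $\Phi_\zeta$ and $\gamma_\zeta$.

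Next I would globalize. Define a smooth (not yet holomorphic) family $\tilde f_\zeta$ on a neighbourhood of $\overline D$ by patching $\Phi_\zeta^{-1}\circ g_\zeta\circ\Phi_\zeta$ on a small neighbourhood of $\zeta$ with the identity away from $V'_\zeta$, using a cut-off that varies smoothly in $\zeta$. A parametric version of the Oka--Weil approximation theorem on the Stein compact $\overline D$, available from Forstneri\v{c}'s Oka theory, then produces a smooth family $f_\zeta$ holomorphic in $z$ on a neighbourhood of $\overline D$ that approximates $\tilde f_\zeta$ uniformly in $\zeta$. Conditions (a)--(c) persist under small perturbations, and (d) follows from the built-in transversality at $\gamma_\zeta(1)$ together with the fact that $\tilde f_\zeta(\overline D)\cap H$ is exactly $\{\gamma_\zeta(1)\}$.

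For the polynomially convex case, apply a parametric Anders\'en--Lempert theorem to approximate the family $\{f_\zeta\}$ uniformly on $\overline D$ by a smooth family of holomorphic automorphisms of $\mc^n$, and follow with a final fibrewise perturbation to impose the exact conditions $F_\zeta(\zeta)=\gamma_\zeta(1)$ and $F_\zeta(\overline D)\cap H=\{\gamma_\zeta(1)\}$. I expect the main technical obstacle to lie in arranging smooth dependence on $\zeta$ throughout: coherent choices of the tubes $N_\zeta$ and the straightening maps $\Phi_\zeta$ across the compact parameter space $\partial D$, and the parametric Anders\'en--Lempert step in the polynomially convex case, where one must keep the global automorphisms close to $f_\zeta$ on all of $\overline D$ simultaneously in the parameter.
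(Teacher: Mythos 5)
Your proposed globalization step does not work as stated, and this is a genuine gap, not a matter of detail. You define $\tilde f_\zeta$ by patching the local exposing map $\Phi_\zeta^{-1}\circ g_\zeta\circ\Phi_\zeta$ against the identity via a smooth cut-off, obtaining a family of smooth maps that are \emph{not} holomorphic in the transition region, and then invoke a ``parametric version of the Oka--Weil approximation theorem'' to produce nearby holomorphic maps. Oka--Weil (and its parametric Oka-theoretic refinements) approximates maps that are already holomorphic on a neighbourhood of a compact set by maps holomorphic on a larger set; it cannot turn a genuinely smooth non-holomorphic map into a nearby holomorphic one. A cut-off multiplied against a holomorphic map destroys holomorphy irreversibly, and no approximation theorem recovers it. The correct mechanism, which is the heart of the paper's proof and is missing from your outline, is the Cartan-pair splitting technique: decompose a neighbourhood of $\overline D$ into sets $A_\zeta$, $B_\zeta$ with overlap $C_\zeta$, observe that the transition map on $C_\zeta$ is a \emph{holomorphic} map close to the identity, and split it as $\gamma_\zeta=\beta_\zeta\circ\alpha_\zeta^{-1}$ with $\alpha_\zeta$, $\beta_\zeta$ holomorphic on $A_\zeta$, $B_\zeta$ respectively. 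The parametric version of this splitting (Lemma~\ref{lem:para splitting}, the main result of Simon's thesis~\cite{Siom14}) is what makes the family depend continuously on $\zeta$, and it is indispensable here.

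You also collapse the paper's two-stage structure into one. The paper first proves Theorem~\ref{normexpose}: a parametric exposing along a short arc $\eta_\zeta$ in the normal direction, entirely near $\partial\Omega$, using the ball-model Theorem~\ref{thm:exposing ball case} followed by the parametric splitting. It then treats the long-range stretching of $\eta_\zeta$ along $\gamma_\zeta$ out to $H$ as a \emph{separate} step: one takes a fibre-preserving smooth isotopy that is the identity near $\overline D$ and stretches $\eta_\zeta$ over $\gamma_\zeta$, and approximates this isotopy by an isotopy of injective holomorphic maps using the Carleman-type approximation result of Kutzschebauch--Wold~\cite{KutzschebauchWold}. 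This second step is also not an application of Oka--Weil; it is a delicate statement about approximating smooth isotopies of $\overline D\cup\eta_\zeta$ (a compact set union a totally real arc) by holomorphic ones, with the arc allowed to move. Trying instead to straighten the whole curve $\gamma_\zeta$ at once by a single biholomorphism $\Phi_\zeta$ on a tube and doing all the exposing in that straightened picture, as you propose, creates a harder gluing problem (the tube does not contain $\overline D$) and gives no control over how $\overline D$ sits in the straightened coordinates. The splitting approach, done first in the small, avoids this. Your remarks about parametric Anders\'en--Lempert in the polynomially convex case are in the right direction, but there too the paper needs the $t$-isotopy from Theorem~\ref{thm:exposing ball case}(v)--(vi) and polynomial convexity of the intermediate images $\phi_{\nu,t}(\overline{\mathbb B^n})$ to make the isotopy argument run; these ingredients do not appear in your sketch.
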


\begin{thm}\label{thm:local expose family}
Let $D\subset\subset \mc^n$ be a strongly pseudoconvex domain with smooth boundary.
For any $\epsilon>0$,
there is a continuous map $f:\partial D\times\overline D\rightarrow\mc^n$
such that for all $\zeta\in\partial D$ the following hold:
\begin{itemize}
\item[1)] $f_\zeta=f(\zeta,\cdot):\overline D\rightarrow\mc^n$ is a holomorphic injective map,
\item[2)] $f_\zeta(\zeta)=\zeta$ is a strictly convex boundary point of $f_\zeta(D)$,
\item[3)] $\partial f_\zeta(D)$ and $\partial D$ are tangential at $\zeta$, and
\item[4)] $||f_\zeta-ID||_{C^1(\overline D)}<\epsilon$.
\end{itemize}
\end{thm}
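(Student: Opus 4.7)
The plan is to parametrize the construction underlying Theorem~\ref{thm:local expose}, ensuring that every ingredient depends continuously on the base point $\zeta\in\partial D$. For each $\zeta$, choose smoothly varying local holomorphic coordinates centered at $\zeta$ in which a defining function of $D$ takes the normal form
$$\rho_\zeta(z) = 2\,\mathrm{Re}(z_1 + Q_\zeta(z)) + H_\zeta(z,\bar z) + O(|z|^3),$$
where $Q_\zeta$ is a holomorphic quadratic polynomial without constant or linear term and $H_\zeta$ is the (positive definite) Levi form. The polynomial map $J_\zeta(z) := (z_1 - Q_\zeta(z), z_2, \dots, z_n)$ is holomorphic, depends smoothly on $\zeta$, fixes the origin with identity differential there, and straightens the $2$-jet of $\partial D$ at $\zeta$ into that of a strictly convex hypersurface. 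Transferring back to global coordinates yields a continuous family of locally defined convexifying biholomorphisms $G_\zeta$ with $G_\zeta(\zeta)=\zeta$ and $dG_\zeta(\zeta)=\mathrm{id}$.

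Next, globalize via a smoothly varying cutoff $\chi_\zeta(z)=\chi(|z-\zeta|/\delta)$ with $\chi\equiv 1$ near $0$ and $\mathrm{supp}\,\chi\subset[0,2]$, setting
$$\tilde f_\zeta(z) := z + \chi_\zeta(z)\bigl(G_\zeta(z)-z\bigr).$$
Because $G_\zeta(z)-z = O(|z-\zeta|^2)$, a direct calculation gives $\|\tilde f_\zeta-\mathrm{id}\|_{C^1(\overline D)} = O(\delta)$ uniformly in $\zeta$, while $g_\zeta := \bar\partial\tilde f_\zeta = (\bar\partial\chi_\zeta)(G_\zeta-z)$ is supported in the shell $\{\delta\le|z-\zeta|\le 2\delta\}$ and satisfies $\|g_\zeta\|_{C^0}=O(\delta)$ and $\|g_\zeta\|_{C^1}=O(1)$. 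Fix a strictly pseudoconvex neighborhood $\widetilde D\supset\overline D$ with smooth boundary and apply a bounded linear $\bar\partial$-solution operator on $\widetilde D$ (the Henkin integral solution or the Kohn canonical solution, both satisfying componentwise H\"older estimates of the form $\|u\|_{C^{k+1/2}}\lesssim\|g\|_{C^k}$) to obtain $u_\zeta$ with $\bar\partial u_\zeta = g_\zeta$, satisfying $\|u_\zeta\|_{C^{1/2}}=O(\delta)$ and $\|u_\zeta\|_{C^{3/2}}=O(1)$; H\"older interpolation then yields $\|u_\zeta\|_{C^1(\overline D)} = O(\delta^{1/2})$.

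Finally subtract from $u_\zeta$ its holomorphic $1$-jet at $\zeta$, a polynomial of $C^1$-norm $O(\delta^{1/2})$ depending continuously on $\zeta$, so that the corrected $u_\zeta$ satisfies $u_\zeta(\zeta)=0$ and $du_\zeta(\zeta)=0$, and put $f_\zeta := \tilde f_\zeta - u_\zeta$. Then $f_\zeta$ is holomorphic on a neighborhood of $\overline D$ and continuous in $\zeta$; it fixes $\zeta$ with identity differential there, giving the tangency in (3); its $2$-jet at $\zeta$ coincides with that of $G_\zeta$, so $\partial f_\zeta(D)$ is strictly convex at $\zeta$, giving (2); and $\|f_\zeta-\mathrm{id}\|_{C^1(\overline D)}<\epsilon$ once $\delta$ is small, giving (4) and forcing the injectivity in (1). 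The main obstacle is the $C^1$-smallness of the $\bar\partial$-correction: the subelliptic $\tfrac{1}{2}$-gain alone is insufficient, but H\"older interpolation between $C^{1/2}$-smallness (from $C^0$-data) and $C^{3/2}$-boundedness (from $C^1$-data) restores genuine $C^1$-smallness of order $\delta^{1/2}$. Parametric continuity of $u_\zeta$ in $\zeta$ is inherited from the linearity of the solution operator, and all remaining ingredients ($Q_\zeta$, coordinates, $\chi_\zeta$, and the subtracted $1$-jet) depend continuously on $\zeta$ by construction.
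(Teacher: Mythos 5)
Your strategy — parametrize the construction of Theorem~\ref{thm:local expose} — is the right high-level idea, but the specific mechanism you use (cut off a quadratic convexifying map and then correct by solving $\bar\partial$) is different from the paper's (multiply $Q_\zeta$ by powers of a holomorphic peak function $p_\zeta$ and average over many exponents), and the $\bar\partial$ route as written has a genuine gap precisely at the point where the peak-function method is designed to be safe.

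The problem is control of the \emph{second-order} jet of the correction $u_\zeta$ at $\zeta$. You subtract the holomorphic $1$-jet of $u_\zeta$ so that $f_\zeta(\zeta)=\zeta$ and $df_\zeta(\zeta)=\mathrm{id}$, which indeed gives conclusion 3), and your Hölder-interpolation argument controls $\|u_\zeta\|_{C^1}=O(\delta^{1/2})$ and hence 4). But conclusion 2) — strict convexity of $\partial f_\zeta(D)$ at $\zeta$ — requires that the $2$-jet of $f_\zeta$ at $\zeta$ agree with that of the convexifying map $G_\zeta$ up to an error that is small relative to the Levi form. You have $f_\zeta=G_\zeta-u_\zeta$ near $\zeta$ and, after the $1$-jet subtraction, $u_\zeta(z)=P_\zeta(z-\zeta)+O(|z-\zeta|^3)$ with $P_\zeta$ a holomorphic quadratic. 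A short computation shows the new local defining function for $f_\zeta(D)$ then has the form $2\,\mathrm{Re}(w_1)+2\,\mathrm{Re}((P_\zeta)_1(w))+\mathcal{L}_\zeta(w)+O(|w|^3)$, so one needs the harmonic quadratic $\mathrm{Re}((P_\zeta)_1)$ to be dominated by $\mathcal{L}_\zeta$. There is no such control: $u_\zeta$ is holomorphic in $B_\delta(\zeta)$ (since $g_\zeta$ vanishes there) with $\sup|u_\zeta|=O(\delta)$, so the Cauchy estimate only gives $|D^2u_\zeta(\zeta)|\lesssim\delta/\delta^2=1/\delta$, which \emph{blows up} as $\delta\to0$; even interpolating with $\|u_\zeta\|_{C^{3/2}}=O(1)$ or pushing to $C^{5/2}$ via $\|g_\zeta\|_{C^2}=O(1/\delta)$ still leaves the second derivative unbounded. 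Subtracting the $2$-jet instead would cost $O(1/\delta)$ in $C^1$-norm and destroy 4). So neither 2) nor the trade-off against 4) is salvaged, and the proof does not close.

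The paper avoids this exactly: rather than cutting off and re-solving $\bar\partial$, it takes the globally holomorphic correction $\varphi_\zeta=\sum_{j=1}^{M}\tfrac{1}{M}Q_\zeta(z-\zeta)\,p_\zeta^{N_j}(z)$, where $p_\zeta$ is the parametric peak function of Lemma~\ref{lem:estimate peak ftn family}. Because $p_\zeta(\zeta)=1$ and $Q_\zeta$ vanishes to second order at $\zeta$, the $2$-jet of $\varphi_\zeta$ at $\zeta$ is \emph{exactly} $Q_\zeta$ for every choice of $M,N_j$, so strict convexity at $\zeta$ is automatic; the $C^1$-smallness is then achieved by spreading the derivative of $Q_\zeta p_\zeta^{N}$ over $M$ shells $\{r_{j+1}\le\|z-\zeta\|\le r_j\}$ using the estimate $|p_\zeta|\le e^{-c\|z-\zeta\|^2}$ and averaging, with everything manifestly continuous in $\zeta$. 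If you want to salvage a $\bar\partial$-based proof you would need a solution operator that simultaneously preserves a prescribed $2$-jet at $\zeta$ and keeps $C^1$-control — which is essentially another packaging of the same peak-function correction — so the paper's route is the economical one here.

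One small additional point: $C^1$-smallness of $f_\zeta-\mathrm{id}$ does not by itself force injectivity on $\overline D$; you should invoke something like Lemma~\ref{lem:diff top} (a compactness argument using that $\overline D$ has $C^2$ boundary) for conclusion 1).
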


For a further discussion of parametric exposing of points, see Section \ref{sec:parametric exposing}.

\section{Transforming a boundary point to a strongly convex one }\label{sec:local transf}
In this section,
we consider transforming a boundary point
of a strongly pseudoconvex domain to a strictly convex one,
with certain control of the behavior of the involved transformation.
The aim is to prove Theorem \ref{thm:local expose}
and Theorem \ref{thm:local expose family}.
We also construct a strongly pseudoconvex domain in $\mc^2$
for which the transformation furnished by Theorem \ref{thm:local expose}
can not be taken in $Aut(\mc^2)$.

We need the following lemmas.
The first lemma is to prove the existence of
peak functions with certain estimates.
The key tool is the existence of embedding of strongly
pseudoconvex domains into strongly convex domains
with certain boundary conditions,
established by the second author in \cite{Fornaess76}.

\begin{lem}\label{lem:estimate peak ftn}
Let $D$ be a bounded strictly pseudoconvex domain
in $\mathbb C^n$ with $C^2$-smooth boundary.
Then for any $\zeta\in\partial D$,
there is a holomorphic function $f$ defined on some neighborhood of $\overline D$
which satisfies the following two conditions:\\
1) $f(\zeta)=1$, and $|f(z)|<1$ for $z\in\overline D\backslash\{\zeta\}$;\\
2) the estimate
  $$|f(z)|\leq e^{-c||z-\zeta||^2}$$
  holds on $\overline D$ for some constant $c>0$.
\end{lem}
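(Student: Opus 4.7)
The approach is to reduce the problem to the strictly convex case via the embedding theorem of \cite{Fornaess76}. By that result, there exist an integer $N$, a bounded strictly convex domain $\Omega\subset\mathbb{C}^N$ with $C^2$ boundary, and a holomorphic embedding $\Phi$ defined on an open neighbourhood of $\overline D$, smooth up to $\overline D$, with $\Phi(\overline D)\subset\overline{\Omega}$, $\Phi(\overline D)\cap\partial\Omega=\Phi(\partial D)$, and $\Phi$ transverse to $\partial\Omega$ along $\Phi(\partial D)$. Once such $\Phi$ is in hand, it suffices to construct a peak function with a quadratic bound on $\overline\Omega$ at $\zeta':=\Phi(\zeta)$ and pull it back.

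On the convex target, such a peak function is produced by a single complex affine function. Let $L$ be the complex affine linear map with $L(\zeta')=0$ whose real part is the supporting affine function of $\Omega$ at $\zeta'$, so that $\mathrm{Re}\,L<0$ on $\overline\Omega\setminus\{\zeta'\}$. I will show that there exists $c_0>0$ with
\[
\mathrm{Re}(L(w))\leq -c_0\,\|w-\zeta'\|^2\qquad\text{for all } w\in\overline\Omega.
\]
This is done in two steps. The strict convexity of $\partial\Omega$ at $\zeta'$, together with $C^2$ smoothness, yields the estimate on some neighbourhood $U$ of $\zeta'$ in $\overline\Omega$ (equivalently, one fits a closed Euclidean ball of finite radius $R$ tangent to the supporting hyperplane at $\zeta'$ and locally containing $\overline\Omega$, giving $c_0=1/(2R)$). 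On the compact set $\overline\Omega\setminus U$, both $-\mathrm{Re}\,L$ and $\|w-\zeta'\|^2$ are bounded away from zero, so the ratio is bounded below by a positive constant. Setting $g(w):=e^{L(w)}$ gives $g(\zeta')=1$ and $|g(w)|\leq e^{-c_0\|w-\zeta'\|^2}$ on $\overline\Omega$.

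Now define $f:=g\circ\Phi$, a holomorphic function in a neighbourhood of $\overline D$. Then $f(\zeta)=1$, $|f(z)|<1$ for $z\in\overline D\setminus\{\zeta\}$, and
\[
|f(z)|\leq e^{-c_0\,\|\Phi(z)-\Phi(\zeta)\|^2}.
\]
Because $\Phi$ is a smooth immersion on the compact set $\overline D$, there is $c_1>0$ and a neighbourhood $W$ of $\zeta$ in $\overline D$ with $\|\Phi(z)-\Phi(\zeta)\|\geq c_1\|z-\zeta\|$ on $W$, giving $|f(z)|\leq e^{-c\|z-\zeta\|^2}$ there. On the complement $\overline D\setminus W$, $|f|$ is bounded by some $M<1$ while $\|z-\zeta\|$ is bounded by $\mathrm{diam}(D)$, so after further shrinking $c$ the inequality extends to all of $\overline D$, proving both (1) and (2).

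The decisive step is the appeal to the Fornaess embedding; it is what converts the essentially nonlinear, pseudoconvex problem into a linear one on a strictly convex target, where an exponential of a supporting affine function does the job. Everything after that is elementary convex geometry plus the fact that $d\Phi$ is injective at $\zeta$.
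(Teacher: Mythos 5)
Your proof is correct and follows essentially the same route as the paper: reduce via the Forn{\ae}ss embedding to a strictly convex target, then exponentiate the supporting affine function there; the paper phrases the final reduction as going all the way to a ball and using $e^{z_1}$, but this is the same mechanism (and in fact the paper's proof of the parametric Lemma~\ref{lem:estimate peak ftn family} uses precisely your $e^{\partial\rho(\xi)(w-\xi)}$). Your proposal merely fills in the elementary convexity and compactness estimates that the paper leaves implicit.
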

\begin{proof}
For the case that $D$ is the ball
$B:=\{z\in\mathbb C^n; |z_1+r|^2+|z_2|^2+\cdots+|z_n|^2<r^2\}$ and $\zeta=0$,
a simple calculation shows that $f(z):=e^{z_1}$ satisfies the conditions.
By the result in \cite{Fornaess76} mentioned above,
the general case can be reduced to the case that $D=B$.
\end{proof}

For the proof of Theorem \ref{thm:local expose family},
we need a parametric version of Lemma \ref{lem:estimate peak ftn}.

\begin{lem}\label{lem:estimate peak ftn family}
Let $D\subset\mc^n$ be a bounded strongly pseudoconvex domain with smooth boundary.
Then there is a smooth map $p:\partial D\times\overline D\rightarrow \mc$ such that:\\
1) For each $\zeta\in\partial D$,
  $p_\zeta(\cdot):=p(\zeta, \cdot)$ is a holomorphic function on $\overline D$;\\
2) $p_\zeta(\zeta)=1$ and $|p_\zeta(z)|<1$ for $z\in\overline D\backslash\{\zeta\}$ for all $\zeta\in\partial D$;\\
3) there exists a constant $c$ such that $|p_\zeta(z)|\leq e^{-c||z-\zeta||^2}$ for $(\zeta,z)\in\partial D\times\overline D$.
\end{lem}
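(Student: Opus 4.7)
The plan is to parametrize the construction from Lemma \ref{lem:estimate peak ftn} by replacing the ad hoc peak function $e^{z_1}$ on the ball with a smoothly varying family of peak functions on a bounded strictly convex target domain, and then pull everything back through the embedding of \cite{Fornaess76}. The single-point proof already reduces to the ball; the parametric version should reduce to a smoothly varying family on a strictly convex domain, where such a family is immediate from convexity.

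First, I would apply the embedding theorem of \cite{Fornaess76} to obtain a holomorphic map $\Phi$, defined on a neighborhood of $\overline D$ and smooth up to the boundary, whose image lies in a bounded strictly convex domain $\Omega\subset\mc^N$ with $C^2$ boundary, satisfying $\Phi(\overline D)\subset\overline\Omega$ and $\Phi(\partial D)\subset\partial\Omega$. Let $\nu:\partial\Omega\to\mc^N\cong\mr^{2N}$ denote the real outward unit normal. Strict convexity and $C^2$-smoothness of $\partial\Omega$ furnish a constant $c_1>0$ with
$$\mathrm{Re}\,\langle w-w',\nu(w')\rangle \leq -c_1\|w-w'\|^2 \quad\text{for all } w\in\overline\Omega,\ w'\in\partial\Omega.$$
Set $G(w',w):=\exp\bigl(\langle w-w',\nu(w')\rangle\bigr)$; this map is entire in $w$, smooth in $(w',w)\in\partial\Omega\times\overline\Omega$, satisfies $G(w',w')=1$, and obeys $|G(w',w)|\leq\exp(-c_1\|w-w'\|^2)$, hence also $|G(w',w)|<1$ for $w\in\overline\Omega\setminus\{w'\}$.

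The candidate is then $p(\zeta,z):=G(\Phi(\zeta),\Phi(z))$ for $(\zeta,z)\in\partial D\times\overline D$. Holomorphy in $z$ and joint smoothness in $(\zeta,z)$ are immediate from the corresponding properties of $\Phi$ and $G$. Property 2) follows because $\Phi$ is injective, forcing $\Phi(z)\neq\Phi(\zeta)$ whenever $z\neq\zeta$. Property 3) reduces to the bi-Lipschitz estimate $\|\Phi(z)-\Phi(\zeta)\|\geq c_2\|z-\zeta\|$, which holds on the compact set $\overline D\times\overline D$ because $\Phi$ is a smooth embedding up to the boundary; combining this with the estimate on $G$ gives the desired constant $c=c_1 c_2^2$.

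The main obstacle I anticipate is verifying that the embedding from \cite{Fornaess76} can be arranged with the two boundary properties needed here, namely smoothness up to $\partial D$ and $\Phi(\partial D)\subset\partial\Omega$, so that the smooth dependence of $\nu$ on its base point transfers to smooth dependence of $p$ on $\zeta$. This is implicit in the non-parametric Lemma \ref{lem:estimate peak ftn} and is part of the Forn\ae ss embedding statement, so no new analytic input is required; the remaining verifications are routine.
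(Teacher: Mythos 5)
Your proposal is correct and follows essentially the same route as the paper: embed $\overline D$ into a strongly convex domain via the Forn\ae ss theorem, take as peak function the exponential of the complex-linear supporting functional at each boundary point, and pull back along the embedding. The paper writes this peak function as $g_\xi(w)=e^{\partial\rho(\xi)(w-\xi)}$ for a defining function $\rho$, which differs from your $\exp\langle w-w',\nu(w')\rangle$ only by the normalization of $\nabla\rho$ versus the unit normal, and the paper leaves the bi-Lipschitz comparison $\|\sigma(z)-\sigma(\zeta)\|\asymp\|z-\zeta\|$ implicit where you make it explicit.
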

\begin{proof}
By a result in \cite{Fornaess76},
there is a proper holomorphic embedding $\sigma$
from some neighborhood of $\overline D$ into some neighborhood
of a bounded strongly convex domain $W\subset\mc^N$ for some $N$
such that $\sigma(\partial D)\subset \partial W$ and
$\sigma(\overline D)$ and $\partial W$ intersect transversely.  Let $\rho$ be
a defining function for $W$, and set $g_\xi(w)=e^{\partial\rho(\xi)(w-\xi)}$.
Then $g$ satisfies 1-3 for $D$ replaced by $W$.  So we may set $p(\zeta,z)=g_{\sigma(\zeta)}(\sigma(z))$.
\end{proof}

The last lemma we need is the following

\begin{lem}\label{lem:diff top}
Let $D$ be a bounded domain in $\mathbb C^n$ with $C^2$-smooth boundary.
Let $F$ be a diffeomorphism from some neighborhood
of $\overline D$ to its image in $\mathbb C^n$.
Assume that $\{F_j\} (j\geq 1)$ is a sequence of smooth maps
defined on some neighborhood of $\overline D$
such that $||F_j-F||$ converges uniformly on $\overline D$ to 0 in $C^1$-norm.
Then $F_j$ is injective on $\overline D$ for $j$ large enough.
\end{lem}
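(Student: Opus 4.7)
The plan is to combine a quantitative local injectivity estimate, inherited uniformly from $F$ via $C^1$-closeness, with a compactness argument that rules out collisions between far-apart points.

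First I would establish a uniform lower bound for the derivative. Since $F$ is a diffeomorphism on some open neighborhood $U$ of the compact set $\overline{D}$, the map $z \mapsto DF(z)$ takes values in invertible matrices on $U$, and by compactness there exists $c_0 > 0$ with $|DF(z)v| \geq c_0|v|$ for every $z \in \overline{D}$ and $v \in \mc^n$. Because $\|F_j - F\|_{C^1(\overline{D})} \to 0$, for all $j$ sufficiently large one has $|DF_j(z)v| \geq (c_0/2)|v|$ uniformly in $z \in \overline{D}$ and $v \in \mc^n$.

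Next I would turn this into a quantitative local injectivity statement for $F_j$. Fix $\delta_0 > 0$ so that the $\delta_0$-neighborhood of $\overline{D}$ lies in $U$, and then shrink to some $\delta \in (0, \delta_0)$ small enough that $\|DF(w) - DF(z)\| \leq c_0/8$ whenever $|w-z| \leq \delta$ with $z \in \overline{D}$; this is possible by uniform continuity of $DF$ on the closure of the $\delta_0$-neighborhood. By the $C^1$ convergence, for $j$ large the same estimate holds for $DF_j$ with $c_0/4$ instead of $c_0/8$. For any $x,y \in \overline{D}$ with $|x-y| < \delta$, the line segment from $y$ to $x$ lies in $U$, so the mean value identity
\[
F_j(x) - F_j(y) = \int_0^1 DF_j\bigl(y + t(x-y)\bigr)(x-y)\,dt
\]
combined with the two estimates above yields $|F_j(x) - F_j(y)| \geq (c_0/4)|x-y|$. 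In particular any failure of injectivity of $F_j$ on $\overline{D}$ involves points separated by at least $\delta$.

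Finally I would close the argument by contradiction. Suppose $F_j$ is not injective on $\overline{D}$ for infinitely many $j$; passing to a subsequence, pick $x_j \neq y_j$ in $\overline{D}$ with $F_j(x_j) = F_j(y_j)$. By the previous paragraph $|x_j - y_j| \geq \delta$. Using compactness of $\overline{D}$, extract further subsequences with $x_j \to x$, $y_j \to y$, so $|x - y| \geq \delta > 0$. Uniform convergence $F_j \to F$ on $\overline{D}$ then gives $F(x) = F(y)$, contradicting injectivity of $F$. The only mild subtlety worth flagging is that $\overline{D}$ is not convex, which is why I work in the fattened neighborhood $U$ and choose $\delta < \delta_0$ so that segments between nearby points of $\overline{D}$ stay in the domain of definition of $F$ and the $F_j$; the $C^2$ smoothness of $\partial D$ is more than enough to accommodate this.
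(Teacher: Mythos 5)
Your overall strategy (a uniform quantitative local injectivity estimate followed by a compactness/contradiction step to rule out far-apart collisions) is a legitimate reorganization of the argument, but there is a genuine gap in how the local estimate is derived, and it sits precisely at the place where the paper invokes the $C^2$-smoothness of $\partial D$.

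The hypothesis gives $C^1$-convergence $F_j\to F$ only \emph{on} $\overline D$, not on a fattened neighborhood. Since $D$ is not convex, the straight segment from $y$ to $x$ with $x,y\in\overline D$ and $|x-y|<\delta$ will in general pass through points $w=y+t(x-y)\notin\overline D$. Your key estimate
\[
\|DF_j(w)-DF_j(z)\|\le c_0/4\quad\text{for }|w-z|\le\delta,\ z\in\overline D
\]
requires bounding $\|DF_j(w)-DF(w)\|$ at such exterior points $w$, and the hypothesis gives you no control there: you only know $\|DF_j-DF\|\to 0$ on $\overline D$. So the step ``by the $C^1$ convergence, for $j$ large the same estimate holds for $DF_j$'' is not justified, and the inequality $|F_j(x)-F_j(y)|\ge (c_0/4)|x-y|$ does not follow. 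Your closing remark that the $C^2$-smoothness of $\partial D$ is ``more than enough to accommodate'' the choice of $\delta$ actually conceals the issue: the $C^2$-smoothness is needed not merely to have a nice fattened neighborhood but to make a local $C^2$ coordinate change that renders $\overline D$ convex near the relevant boundary point, so that the integration path can be taken \emph{inside} $\overline D$, where the $C^1$-convergence is available. This is exactly what the paper's proof does: it passes to a convergent subsequence $a_j,b_j\to a$, convexifies $\overline D$ near $a$ by a $C^2$ change of coordinates, and then integrates $dF_j$ along the segment lying in $\overline D$, letting $\int_0^1 dF_j(\gamma_j(t))\,dt\to dF(a)$ (nonsingular). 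If you want to retain your global quantitative-estimate approach, you would need either to strengthen the hypothesis to $C^1$-convergence on a fixed neighborhood of $\overline D$, or to replace straight segments by short paths staying inside $\overline D$ (using the $C^2$ boundary to control their length), or to fall back to the paper's local convexification at the accumulation point.
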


\begin{proof}
We prove by contradiction.
If it were not the case,
then, without loss of generality,
we can assume that for each $j$ there exist $a_j$ and $b_j$ in $\overline D$ with $a_j\neq b_j$
such that $f_j(a_j)=f_j(b_j)$.
We may assume $a_j\rightarrow a$ and $b_j\rightarrow b$ as $j\rightarrow\infty$.
Then it is necessary that $a=b$ since $F$ is injective.
We have $a\in D$ or $a\in\partial D$.
Since $\partial D$ is $C^2$,
taking a $C^2$-smooth coordinate change near $a$ if necessary,
we may assume that there is a neighborhood $U$ of $a$ such that $U\cap \overline D$ is convex.
Let $\gamma_j$ be the line segment in $\mathbb C^n$ given by $t\rightarrow (1-t)a_j+tb_j,\ t\in [0,1]$,
then $\gamma_j\subset \overline D$ for $j$ large enough.
We have
\begin{equation}
\begin{split}
F_j(b_j)-F_j(a_j)
&=\int_{0}^1\frac{dF_j(\gamma_j(t))}{dt}dt\\
&=(\int_{0}^1dF_j(\gamma_j(t))dt)(b_j-a_j),
\end{split}
\end{equation}
which can not be 0 for $j$ large enough since $\int_{0}^1dF_j(\gamma_j(t))dt\rightarrow dF(a)$
and $dF(a)$ is nonsingular. Contradiction.
\end{proof}

\emph{Proof of Theorem \ref{thm:local expose}:}
We can assume that $\zeta=0$ and the local defining function
$\rho(z)$ of $D$ near $0$ can be expanded as
$$\rho(z)=Re(z_n+Q(z))+\sum_{i,j}a_{ij}z_i\bar z_j+\cdots,$$
where $Q(z)=\sum_{i,j}q_{ij}z_iz_j$ is a symmetric quadratic form.
Let $f$ be a holomorphic function defined in some neighborhood of $\overline D$
that is furnished by Lemma \ref{lem:estimate peak ftn}.
We will show that there are positive integers $M$ and $N_j, j=1, \cdots, M$ such that
the transformation $F(z)=(\tilde z_1, \cdots, \tilde z_n)$ given by
\begin{equation}\label{eqn:map from sum}
\begin{cases}
\tilde z_k=z_k,\ \text{for}\ k=1, \cdots, n-1, \\
\tilde z_n=z_n+\sum_{j=1}^M\frac{1}{M}Qf^{N_j}
\end{cases}
\end{equation}
is that expected by this theorem.

First note that
$$||Q'f^N||\lesssim ||z||e^{-Nc ||z||^2},$$
and hence goes to zero uniformly on $\overline D$
as $N\rightarrow \infty$.
Note also that
\begin{equation}\label{eqn:derivative estimate}
||QNf'f^{N-1}||\lesssim N||z||^2e^{-Nc ||z||^2}.
\end{equation}
Let $\sigma$ be the function on $(0,+\infty)$ given by $x\mapsto xe^{-c x}$.
It is clear that $\sigma(0)=0$ and $\lim_{x\rightarrow +\infty}\sigma(x)=0$,
and $\sigma$ attains its maximum $\frac{1}{c e}$ at $x=1/c$.
So the right hand side of \eqref{eqn:derivative estimate} attains its maximum
$\frac{1}{c e}$ at $||z||^2=\frac{1}{Nc}$.
For any $\epsilon>0$ and a sufficiently large integer $M$ with $\frac{1}{Mec}<\epsilon/2$,
by the above discussion,
we can find $r_1>r_2>\cdots>r_M>0$ and positive numbers $N_1, \cdots, N_M$ such that
$||(Qf^{N_j})'||<\frac{\epsilon}{2M}$ on $\{z\in\overline D; ||z||\geq r_j\ \text{or}\ ||z||\leq r_{j+1}\}$.
Define $\varphi_M=\sum_{j=1}^M\frac{1}{M}Qf^{N_j}$,
then $\varphi_M$ is a holomorphic function defined on some neighborhood of $\overline D$,
and $||\varphi'_M||<\epsilon$ on $\overline D$.
By taking $N_j$ large enough,
we can also require that $|\varphi_M(z)|<\epsilon$ for $z\in\overline D$.
Let $F_M$ be the map defined as in \eqref{eqn:map from sum}.
By Lemma \ref{lem:diff top},
$F_M$ is injective on $\overline D$ for $M$ and $N_j$ large enough.
It is clear that $||F_M-\mathrm{id}||_{C^1}$ converges to 0 uniformly on $\overline D$,
and $F_M(p)$ is a strongly convex boundary point of $F_M(D)$.

$\hfill\square$

\emph{Proof of Theorem \ref{thm:local expose family}:}
For $\zeta\in\partial D$,
denote by $n_\zeta$ the unit outward-pointing normal vector
of $\partial D$ at $\zeta$.
Let $L_\zeta=\{xn_\zeta+iyn_\zeta; x, y\in\mr\}$
and let $\pi_\zeta:\mc^n\rightarrow L_\zeta$ be the orthogonal projection.
For $z\in \mc^n$,
let $t_{\zeta}(z)=x+iy$ if $\pi_\zeta(z-\zeta)=xn_\zeta+iyn_\zeta$
and let $z'_\zeta=(z-\zeta)-\pi_{\zeta}(z-\zeta)$.
Note that $z\mapsto (z'_\zeta, t_\zeta(z))$ is a linear isomorphism
form $\mc^n$ to $(T^\mc_\zeta\partial D)\times\mc$
which maps $\zeta$ to the origin,
where $T^\mc_\zeta\partial D=T_\zeta\partial D\cap iT_\zeta\partial D$
is the complex tangent space of $\partial D$ at $\zeta$.
Let $\rho$ be a defining function of $D$
which is strictly plurisubharmonic on some neighborhood of $\overline D$.
After normalization, near each $\zeta\in\partial D$,
$\rho$ can be expanded as follows:
\begin{equation}\label{eqn:expan of def ftn}
\rho(z)=Re (t_\zeta(z))+Q_\zeta(z-\zeta))+\mathcal L_\zeta(z-\zeta)+o(||z-\zeta||^2),
\end{equation}
where $L_\zeta$ is the Levi form of $\rho$ at $\zeta$ and $Q$ is the complex Hessian
of $\rho$ at $\zeta$.

By some basic results from calculus,
the right hand side of \eqref{eqn:expan of def ftn} can be viewed
as a smooth function defined on some neighborhood of $\partial D\times\overline D$ in $\partial D\times\mc^n$.
Let $p:\partial D\times\overline D\rightarrow \mc$ be the smooth map furnished by Lemma \ref{lem:estimate peak ftn family}.
Then, by the same estimate as in the proof of Theorem \ref{thm:local expose},
we can show that there are positive integer $M$ and $N_j, j=1, \cdots, M$
such that the map $F(\zeta, z):\partial D\times \overline D\rightarrow\mc^n$ given by
\begin{equation*}
\begin{cases}
F(\zeta, z)'_\zeta =z'_\zeta \\
t_\zeta(F(\zeta,z))=t_\zeta(z)+\sum_{j=1}^M\frac{1}{M}Q_\zeta(z-\zeta)p_\zeta^{N_j}(z)
\end{cases}
\end{equation*}
satisfies the required conditions.
$\hfill{\square}$

We now construct a strongly pseudoconvex domain $D\subset\mc^2$
such that the map $F$ in Theorem \ref{thm:local expose}
can not be taken in $\mathrm{Aut_{hol}}(\mc^n)$ for some $p\in\partial D$.

\begin{exm}
Let $D:=\{(z,w)\in\mc^2: |z|^2+|\frac{1}{z}|^2+|w|^2<3\}$.
Then $D$ is a bounded strongly pseudoconvex domain with smooth boundary.
The intersection $A$ of $D$ and the $z$-axis is an annulus.
Let $p$ be an inner boundary point of $A$.
It is clear that there exists a compact set $V\subset D$ such that
the polynomial hull of $V$ contains an open neighborhood of $p$.
So if $F_j\in \mathrm{Aut_{hol}}(\mc^2)$ is a sequence that converges to $\mathrm{id}$ uniformly on $\overline D$,
then $F_j$ converges to $\mathrm{id}$ uniformly on a neighborhood of $p$.
So $F_j(p)$ can not be a strictly convex boundary point of $F_j(D)$ for $j$ large enough.
\end{exm}

\section{The ball model case in $\mc^n$}\label{sec:the ball model}

The main technical problem in proving Theorem \ref{thm:exposing manifolds}
which differs from the results in \cite{Diedrich-Fornaess-Wold13} is to prove
an exposing result for balls, which can later be used to pass from
local to global exposing by approximately gluing.
For $r\in\mr$,
we denote the point $(0, \cdots, 0, r)$ in $\mc^n$ by $p_r$.
For $r>0$ and $a\in\mc^n$,
$B_r(a)$ denotes the ball in $\mc^n$ centered at $a$ with radius $r$,
and we denote by $\mathbb B^n$ the unit ball in $\mc^n$ as usual.
For $r, s\in\mr, r<s$, we let $l_{r,s}$ denote the closed line segment between $p_r$ and $p_s$. \
The main aim of this section is to prove
the following theorem which is a key step
in the proof of Theorem \ref{thm:exposing manifolds}.

\begin{thm}\label{thm:exposing ball case}
Let $r, s$ be positive numbers with $s>r+1$.
Then for any open neighborhood $V$ of $l_{1, s-r}$,
any open neighborhood $U$ of $l_{1, s-r}\cup \overline{B_r(p_s)}$,
and any sufficiently small $\epsilon>0$,
there exits a sequence of maps $\varphi_{\nu,t}:\overline{\mathbb B^n}\rightarrow \mc^n$, $(t\in[0,1])$,
of injective holomorphic maps such that the following hold:
\begin{itemize}
\item[(i)] $\phi_{\nu,t}$ are smooth in $t$ and $\phi_{\nu,0}=Id$,
\item[(ii)] $\phi_{\nu,t}\rightarrow Id$ uniformly on $\overline{\mathbb B^n}\backslash B_\epsilon(p_1)$ for $t\in [0,1]$, as $\nu\rightarrow \infty$,
\item[(iii)] $\phi_\nu(p_1)=p_{s+r}$ for all $\nu\in\mathbb N$, where $\phi_\nu:=\phi_{\nu, 1}$,
\item[(iv)] $\phi_\nu(B_\epsilon(p_1)\cap \overline{\mathbb B^n})\subset V\cup B_r(p_s)\cup\{p_{s+r}\}$,
\item[(v)] $\phi_{\nu,t}(\overline{\mathbb B^n}\cap B_\epsilon(p_1))\subset U$ for all $\nu$ and $t$,
\item[(vi)] the images $\phi_{\nu,t}(\overline{\mathbb B^n})$ are polynomially convex for all $\nu$ and $t$.
\end{itemize}
\end{thm}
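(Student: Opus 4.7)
My plan is to construct $\phi_{\nu,t}$ as an explicit holomorphic shear in the $e_n$-direction, driven by a high power of a peak function at $p_1$ composed with a Riemann map of a slender planar region. Concretely, I take the explicit peak function $f(z)=e^{z_n-1}$ on the unit ball, which satisfies $f(p_1)=1$ and $|f(z)|\leq e^{-c\|z-p_1\|^2}$ on $\overline{\mathbb B^n}$ (as in Lemma~\ref{lem:estimate peak ftn}), and I choose a long thin Jordan domain $\Omega\subset\mathbb C$ containing the real segment $[0,s+r-1]$, with $0$ in its interior and $s+r-1$ on its smooth boundary, chosen so thin that the translated image $p_1+\overline{\Omega}\,e_n$ lies in $V\cup B_r(p_s)\cup\{p_{s+r}\}$ and a slight enlargement lies inside $U$. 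Let $g:\overline{\mathbb D}\to\overline{\Omega}$ be the (boundary-extended) Riemann map normalized so that $g(0)=0$ and $g(1)=s+r-1$. Then set
\[
\phi_{\nu,t}(z):= z + t\,g(f(z)^\nu)\,e_n,\qquad \nu\in\mathbb N,\ t\in[0,1].
\]

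The easy conditions follow quickly. Condition (i) and (iii) are immediate, since $\phi_{\nu,0}=\mathrm{id}$ and $\phi_{\nu,1}(p_1)=p_1+g(1)\,e_n=p_{s+r}$, and the family is polynomial (hence smooth) in $t$. For (ii), on $\overline{\mathbb B^n}\setminus B_\epsilon(p_1)$ one has $|f(z)|\leq 1-\delta$ for some $\delta=\delta(\epsilon)>0$, whence $|f(z)^\nu|\to 0$ uniformly, and combined with $g(0)=0$ this gives $\phi_{\nu,t}\to\mathrm{id}$ uniformly in $z$ for $t\in[0,1]$. Conditions (iv) and (v) follow from the shear form: for $z\in B_\epsilon(p_1)\cap\overline{\mathbb B^n}$, $\phi_{\nu,t}(z)$ lies within $\epsilon$ of $p_1+t\,g(f(z)^\nu)\,e_n\in p_1+t\,\overline{\Omega}\,e_n$, which belongs to $U$ for all $t$ and to $V\cup B_r(p_s)\cup\{p_{s+r}\}$ when $t=1$.

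Condition (vi) about polynomial convexity of $\phi_{\nu,t}(\overline{\mathbb B^n})$ is handled by noting that the image is (a small perturbation of) the strictly convex ball $\overline{\mathbb B^n}$ together with a thin holomorphically embedded disc--finger attached at $p_1$ and landing inside the strictly convex ball $\overline{B_r(p_s)}$; such a ``convex body with an attached analytic disc landing in another convex body'' is polynomially convex by a plurisubharmonic exhaustion argument, or via Stolzenberg-type results on polynomial hulls of analytic discs attached to polynomially convex sets.

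The main obstacle is injectivity. Since the shear acts only in the $z_n$-coordinate and the shear function $g(f(z)^\nu)=g(e^{\nu(z_n-1)})$ depends only on $w:=z_n$, injectivity of $\phi_{\nu,t}$ on $\overline{\mathbb B^n}$ reduces to injectivity of the one-variable map $\psi_{\nu,t}(w):=w+t\,g(e^{\nu(w-1)})$ on $\overline{\mathbb D}$. For large $\nu$ the map $w\mapsto e^{\nu(w-1)}$ is very far from injective on $\overline{\mathbb D}$---its image winds around the disc $O(\nu)$ times near $w=1$---and a critical-point analysis of $\psi_{\nu,t}$ shows that naive choices produce critical points near $\mathrm{Re}(w)\approx 1-(\log\nu)/\nu$. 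The hard part of the proof is therefore to tailor the shape of $\Omega$ (equivalently, the behaviour of $g'$ near $w=1$), or else to replace $f^\nu$ by a carefully designed sequence of peak-like holomorphic functions on $\overline{\mathbb B^n}$, so that $\psi_{\nu,t}'\neq 0$ on $\overline{\mathbb D}$ and the boundary curve $\psi_{\nu,t}(\partial\mathbb D)$ is a Jordan curve with winding number one about its interior. Once this is established, injectivity on the closed ball follows, possibly with a final appeal to Lemma~\ref{lem:diff top} to confirm global injectivity from $C^1$ closeness to the idealized embedding.
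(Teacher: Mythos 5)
Your construction has the right macroscopic structure---a one-variable transformation in the $z_n$-coordinate that pushes a thin neighborhood of $p_1$ out along a tube into the far ball---but the specific form you chose, the self-shear $\psi_{\nu,t}(w)=w+t\,g(e^{\nu(w-1)})$, has an injectivity defect that is not merely a technicality to be patched: it genuinely fails. Computing $\psi_{\nu,t}'(w)=1+t\nu\,g'(u)u$ with $u=e^{\nu(w-1)}$, the equation $g'(u)u=-\tfrac{1}{t\nu}$ has a solution $u\approx -\tfrac{1}{t\nu g'(0)}$ near the origin (note $g'(0)\neq 0$ since $0$ is interior to $\Omega$), and one checks that this pulls back to a point $w\approx 1-\tfrac{\log\nu}{\nu}+\tfrac{i\pi}{\nu}$ which does lie in $\mathbb D$. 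So $\psi_{\nu,t}$ has an interior critical point and is not injective for any choice of thin $\Omega$; tailoring the shape of $\Omega$ (option 1) cannot help because $g'(0)$ cannot vanish. Your second option---replacing $f^\nu$ by a ``carefully designed sequence''---is exactly where the real work lies, and you leave it undone.

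The paper avoids this issue altogether by not using a perturbative shear at all: it constructs injective one-variable maps $f_\nu$ directly as boundary-normalized Riemann maps of dumbbell regions (Lemma~\ref{lem:Riemann map convergence}, via Mergelyan plus Goluzin's theorem on kernel convergence of conformal maps), and sets $\phi_\nu(z)=(z_1,\dots,z_{n-1},f_\nu(z_n))$. Injectivity is then automatic, with no derivative estimate to fight. The paper then needs the M\"obius-map estimate of Lemma~\ref{lem:estimate mobus map} to verify (iv), and proves (vi) by the clean observation that $f_\nu^{-1}$ is approximable by entire functions on the simply connected image, so $\phi_\nu$ is a Runge embedding (your Stolzenberg-type appeal is vaguer than needed and not obviously applicable, since the ``finger'' is not a single analytic disc). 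The paper also uses the scaling isotopy $\phi_{\nu,t}(z)=\tfrac{1}{t}\phi_\nu(tz)$ rather than your linear interpolation; either is fine once you have an injective $\phi_\nu$, but with yours you would still have to verify injectivity for each $t$. To rescue your write-up you would essentially have to reinvent Lemma~\ref{lem:Riemann map convergence}, so I would recommend switching to the conformal-map construction outright.
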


\begin{lem}\label{lem:Riemann map convergence}
Let $a<b$ be points on the real line with $b-a>2$,
and let $l$ be the closed line segment between $a+1$ and $b-1$.
Let $\delta_\nu\rightarrow 0$ and let
$\Omega_\nu$ be the $\delta_\nu$ neighborhood of $\overline {D_1(a)}\cup l\cup \overline {D_1(b)}$,
where $D_s(a)$ is the disk in $\mc$ centered at $a$ with radius $s$.
There exist injective holomorphic maps $f_\nu, g_\nu: \overline{\mathbb D}\rightarrow \mc$
such that the following holds:
\begin{itemize}
\item[(i)] $f_\nu(\overline{\mathbb D}\cap \mr)\subset\mr, g_\nu(\overline{\mathbb D}\cap \mr)\subset\mr$,
           and $f_\nu(\mathbb D)=g_\nu(\mathbb D)\subset \Omega_\nu$ for all $\nu$,
\item[(ii)] $f_\nu(-1)=g_\nu(-1)=a-1, f_\nu(1)=g_\nu(1)=b+1$ and $f_\nu(0)=a, g_\nu(0)=b$,  for all $\nu$,
\item[(iii)] $f_\nu(z)\rightarrow z+a$ uniformly on some fixed neighborhood of $\overline{\mathbb D}\backslash D_\epsilon(1)$ for
any small $\epsilon>0$;
\item[(iv)] $g_\nu(z)\rightarrow z+b$ uniformly on some fixed neighborhood of $\overline{\mathbb D}\backslash D_\epsilon(-1)$ for
any small $\epsilon>0$;
\item[(v)] $f_\nu(z)=g_\nu(m_\nu(z))$ for all $\nu$, where
     $$m_\nu(z)=\frac{z-r_\nu}{1-r_\nu z}$$
     with $r_\nu\rightarrow 1$ as $\nu\rightarrow \infty$.
\end{itemize}
\end{lem}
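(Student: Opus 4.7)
The plan is to realize $f_\nu$ and $g_\nu$ as two Riemann maps of a common auxiliary ``dumbbell'' domain $D_\nu\subset\Omega_\nu$, and then to derive (i)--(v) from the geometry of $D_\nu$ together with Carath\'eodory kernel convergence and harmonic-measure estimates near the boundary.

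\emph{Setup of $D_\nu$.} For each $\nu$ I would construct a simply connected, real-symmetric, real-analytically bounded domain $D_\nu\subset\Omega_\nu$ whose leftmost and rightmost boundary points are exactly $a-1$ and $b+1$, whose two bells contain $\overline{D_{1-\epsilon_\nu}(a)}$ and $\overline{D_{1-\epsilon_\nu}(b)}$ with $\epsilon_\nu\to 0$, and whose neck is a smooth thickening of $l$ of width on the order of $\delta_\nu$. The real-analyticity of $\partial D_\nu$ guarantees, via Schwarz reflection, that any conformal map $\mathbb D\to D_\nu$ extends holomorphically across $\partial\mathbb D$.

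\emph{Definition and the algebraic relation.} Let $f_\nu\colon\overline{\mathbb D}\to\overline{D_\nu}$ be the Riemann map with $f_\nu(0)=a$ and $f_\nu(-1)=a-1$. Real-symmetry of $D_\nu$ and uniqueness force $f_\nu$ to commute with complex conjugation, so $f_\nu((-1,1))\subset\mathbb R$, and orientation preservation gives $f_\nu(1)=b+1$. Define $g_\nu$ analogously, with $g_\nu(0)=b$. Both maps parametrize the same target $D_\nu$, so $m_\nu:=g_\nu^{-1}\circ f_\nu$ lies in $\mathrm{Aut}(\mathbb D)$, fixes $\pm 1$, and is real; hence it has the form $m_\nu(z)=(z-r_\nu)/(1-r_\nu z)$ for some $r_\nu\in(-1,1)$. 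This settles (i), (ii), and (v).

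\emph{Interior convergence.} I would next verify that the pointed domains $(D_\nu,a)$ kernel-converge to $(D_1(a),a)$: any open connected set in the kernel through $a$ cannot contain any off-axis point of the neck, because a compact neighborhood of such a point lies at distance bounded away from $\mathbb R$, eventually exceeding $\delta_\nu$; openness then forces the kernel to lie entirely in $D_1(a)$, while $D_1(a)$ itself is in the kernel by construction. Carath\'eodory's theorem delivers $f_\nu(z)\to z+a$ locally uniformly on $\mathbb D$, and similarly $g_\nu(z)\to z+b$. From $g_\nu(-r_\nu)=f_\nu(0)=a$ together with the locally uniform convergence $g_\nu\to z+b$ away from $-1$ and the numerical fact $a-b<-2$, one concludes $-r_\nu\to-1$, i.e.\ $r_\nu\to 1$.

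\emph{Boundary convergence---the main obstacle.} The technical core is upgrading interior convergence to uniform convergence on a fixed open neighborhood in $\mathbb C$ of $\overline{\mathbb D}\setminus D_\epsilon(1)$. Geometrically, $f_\nu$ concentrates the preimage of the right bell and most of the neck into a tiny arc of $\partial\mathbb D$ around $z=1$. I would make this quantitative via an extremal-length/modulus estimate: a long thin rectangular neck of width $\sim\delta_\nu$ has conformal modulus tending to infinity, so the harmonic measure from $a$ of the right-bell boundary in $D_\nu$ tends to $0$; by conformal invariance the arc $f_\nu^{-1}(\partial_{\mathrm{right}} D_\nu)\subset\partial\mathbb D$ shrinks to $\{1\}$, and a hyperbolic-distance estimate places the entire preimage of the right bell inside $D_\epsilon(1)\cap\mathbb D$ for $\nu$ large. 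Hence on $\overline{\mathbb D}\setminus D_\epsilon(1)$ the family $\{f_\nu\}$ is uniformly bounded, with images in an arbitrarily small neighborhood of $\overline{D_1(a)}$. Real-analyticity of $\partial D_\nu$ and $\partial\mathbb D$ then allows Schwarz reflection to extend $f_\nu$ holomorphically to a fixed open neighborhood of the arc $\partial\mathbb D\setminus D_\epsilon(1)$ in $\mathbb C$; the extended family is normal, and its unique interior cluster point $z+a$ forces uniform convergence, yielding (iii). Item (iv) follows by the symmetric argument. This upgrade from interior to near-boundary convergence is, I expect, the main difficulty, but the required tools---extremal length, Schwarz reflection, and normal families---are classical.
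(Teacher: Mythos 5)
Your proposal is correct in outline but takes a genuinely different route from the paper. The paper's proof uses Mergelyan's theorem to construct embeddings $\phi_\nu$ of the dumbbell $\overline{D_1(a)}\cup l\cup\overline{D_1(b)}$ into $\mathbb C$ that fix $\overline{D_1(a)}$ (approximately) and crush $l\cup\overline{D_1(b)}$ to a point near $a+1$; the image domains $\phi_\nu(W_\nu)$ then converge to the disk $D_1(a)$ in the sense of Goluzin, and a classical theorem (Goluzin, Theorem~2, p.~59) gives \emph{uniform} convergence on $\overline{\mathbb D}$ of the associated Riemann maps $\psi_\nu$. Setting $f_\nu=\phi_\nu^{-1}\circ\psi_\nu$ then yields (i)--(iii) essentially for free: the boundary estimates you identify as the main obstacle are entirely offloaded to the Goluzin theorem. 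Moreover, the paper achieves (v) by reducing to the symmetric case $a=-b$ and setting $g_\nu(z)=-f_\nu(-z)$, rather than by re-normalizing a single Riemann map as you do; both work, but yours avoids the symmetry reduction. Your approach---Carath\'eodory kernel convergence for the interior, extremal length and harmonic measure to localize the preimage of the right bell near $z=1$, then Schwarz reflection and normal families to upgrade to near-boundary uniform convergence---is a legitimate alternative and arguably more self-contained, at the cost of the heavier potential-theoretic estimate.

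One step of your sketch needs tightening. You invoke ``real-analyticity of $\partial D_\nu$'' to Schwarz-reflect $f_\nu$ across a \emph{fixed} neighborhood of the arc $\partial\mathbb D\setminus D_\epsilon(1)$ and to conclude that the reflected family is normal there, but the width of the reflection strip and the bound on the reflected values both depend on the geometry of $\partial D_\nu$, which varies with $\nu$ (the neck attachment wanders, and ``contains $\overline{D_{1-\epsilon_\nu}(a)}$'' does not pin down the boundary arc). To make this uniform, arrange the construction so that the portion of $\partial D_\nu$ carrying $f_\nu(\partial\mathbb D\setminus D_\epsilon(1))$ is a \emph{fixed} circular arc, say $\{|z-a|=1\}$ minus a small arc near $a+1$ (which lies inside $\Omega_\nu$ for every $\nu$). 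Then the anti-holomorphic reflection across this circle is the explicit M\"obius inversion $w\mapsto a+1/\overline{(w-a)}$, the reflected values of $f_\nu$ are uniformly bounded because $f_\nu$ stays in an annulus around the circle near that arc, and the reflected family is defined on a fixed annular neighborhood of $\partial\mathbb D\setminus D_\epsilon(1)$. With that modification your normal-families argument closes the gap and (iii), (iv) follow as you say; the rest of your argument (the M\"obius form of $g_\nu^{-1}\circ f_\nu$ and $r_\nu\to 1$) is fine.
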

\begin{proof}
For simplicity we assume $a=-2,b=2$.
We will construct maps $f_\nu$ that satisfy (i)-(iii)
and $f_\nu(\overline{\mathbb D})$ is invariant under the transformation $z\mapsto -z$.
By symmetry, if we put $g_\nu(z)=-f_\nu(-z)$,
then all conditions about $f_\nu$ and $g_\nu$ in the lemma hold.
The convergence of $r_\nu$ to 1 in (v) will be guaranteed by
the convergence of $f_\nu$ as presented in (iv).

We now construct such $f_\nu$.
By Mergelyan's Theorem there exists a sequence of embeddings
$\phi_\nu:\overline{D_1(-2)}\cup l\cup\overline{D_1(2)}\rightarrow\mathbb C$
such that $\phi_\nu\rightarrow\mathrm{id}$ on $\overline{D_1(-2)}$, $\phi_\nu(l)$
shrinks to the point $-1$, as $\nu\ra\infty$,
and such that $\phi_\nu$ is are as close as we like to a translation and
a scaling on $\overline{D_1(2)}$, such that
the diameter of $\phi_\nu(D_1(2))$ shrinks to zero.
We may also interpolate to get $\phi_\nu(-2)=-2,\phi_\nu'(-2)=1$).
Replacing $\phi_\nu$ by the map given by $z\mapsto \frac{\phi_\nu(z)+\overline{\phi_\nu(\bar {z})}}{2}$
if necessarily, we can assume that $\phi_\nu(z)=\overline{\phi_\nu(\bar {z})}$,
and hence $\phi_\nu$ maps real numbers to real numbers
Let $W_\nu=D_1(-2)\cup l(\delta'_\nu)\cup D_1(2)$,
where $l(\delta'_\nu)$ is the $\delta'_\nu$-neighborhood of $l$ and $0<\delta'_\nu<\delta_\nu$.
If the $\delta'_\nu$ are chosen small enough, then $\phi_\nu(W_\nu)$ converges
to the ball $D_1(-2)$ in the sense of Goluzin.

Let $\psi_\nu:\mathbb D\rightarrow\phi_\nu(W_\nu)$ be the Riemann map
with $\psi_\nu(0)=-2, \psi_\nu'(0)>0$,
we have that $\psi_\nu(z)\rightarrow z-2$ uniformly on $\overline{\mathbb D}$
(see \cite{Goluzin69}, Theorem 2, p. 59.).  Morover, $\psi_\nu(z)=\overline{\psi_\nu(\overline z)}$.
Then it is clear that $\tilde f_\nu:=\phi_\nu^{-1}\circ\psi_\nu$ satisfy (i), (ii), and (iii).

\end{proof}

\begin{lem}\label{lem:estimate mobus map}
Let $0<r<1$ and define $\varphi_r(z)=\frac{z-r}{1-rz}$.
Then either $|\varphi_r(z)|<|z|$ or $Re(\varphi_r(z))<0$
for all $z\in\mathbb D$.
\end{lem}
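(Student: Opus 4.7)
The plan is a direct computation. I would first derive explicit formulas for the two quantities $|\varphi_r(z)|^2 - |z|^2$ and $\mathrm{Re}(\varphi_r(z))$ in terms of $\mathrm{Re}(z)$ and $|z|^2$, and then show that the two unfavorable inequalities (namely $|\varphi_r(z)|\ge|z|$ together with $\mathrm{Re}(\varphi_r(z))\ge 0$) cannot hold simultaneously for any $z\in\mathbb{D}$.

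For the first computation, multiplying numerator and denominator of $|\varphi_r(z)|^2-|z|^2$ by $|1-rz|^2$, expanding $|z-r|^2 = |z|^2 - 2r\,\mathrm{Re}(z)+r^2$ and $|1-rz|^2 = 1 - 2r\,\mathrm{Re}(z) + r^2|z|^2$, I expect the numerator to factor as
\[
|z-r|^2 - |z|^2|1-rz|^2 \;=\; (1-|z|^2)\bigl(r^2(1+|z|^2)-2r\,\mathrm{Re}(z)\bigr).
\]
Since $z\in\mathbb{D}$ gives $1-|z|^2>0$, the sign of $|\varphi_r(z)|^2-|z|^2$ is the sign of $r^2(1+|z|^2)-2r\,\mathrm{Re}(z)$, i.e.\ $|\varphi_r(z)|\ge|z|$ is equivalent to $r(1+|z|^2)\ge 2\,\mathrm{Re}(z)$. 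For the second computation, writing $\varphi_r(z)=(z-r)(1-r\bar z)/|1-rz|^2$ and extracting the real part of the numerator yields
\[
\mathrm{Re}(\varphi_r(z))\;=\;\frac{(1+r^2)\,\mathrm{Re}(z) - r(1+|z|^2)}{|1-rz|^2},
\]
so $\mathrm{Re}(\varphi_r(z))\ge 0$ is equivalent to $(1+r^2)\,\mathrm{Re}(z)\ge r(1+|z|^2)$.

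Now I would combine the two inequalities. Assuming both hold, one chains $(1+r^2)\,\mathrm{Re}(z)\ge r(1+|z|^2)\ge 2\,\mathrm{Re}(z)$, which gives $(r^2-1)\,\mathrm{Re}(z)\ge 0$. Since $0<r<1$ forces $r^2-1<0$, we get $\mathrm{Re}(z)\le 0$. On the other hand, the right-hand side $r(1+|z|^2)$ is strictly positive, so the second inequality forces $\mathrm{Re}(z)>0$, a contradiction. Hence at every $z\in\mathbb{D}$ at least one of the two strict inequalities $|\varphi_r(z)|<|z|$ or $\mathrm{Re}(\varphi_r(z))<0$ must hold, proving the lemma.

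I do not foresee any real obstacle here; the only thing that requires care is verifying the clean factorization of $|z-r|^2-|z|^2|1-rz|^2$, and making sure the contradiction argument handles the borderline case (both inequalities non-strict) correctly, which it does because $r(1+|z|^2)>0$ precludes $\mathrm{Re}(z)\le 0$.
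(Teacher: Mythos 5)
Your computation is correct, and it proves the lemma; but it takes a genuinely different route from the paper. The paper argues geometrically: it observes that $\varphi_r$ maps each circle $\Gamma_a=\{|z|=a\}$ to a circle symmetric about the real axis, computes the diameter $|\varphi_r(a)-\varphi_r(-a)|=2a\frac{1-r^2}{1-a^2r^2}<2a$ to conclude the image circle has radius less than $a$, and combines this with $\varphi_r(a)<a$ to deduce that $\varphi_r(\Gamma_a)\cap\Gamma_a$ is either empty or lies in $\{\operatorname{Re}(w)<0\}$. That argument is short but leaves a couple of geometric steps to the reader (one must still see why the arc of $\varphi_r(\Gamma_a)$ outside $\Gamma_a$ lies in the left half-plane once the two intersection points do). Your proof replaces this with two explicit identities,
\[
|z-r|^2-|z|^2|1-rz|^2=(1-|z|^2)\bigl(r^2(1+|z|^2)-2r\operatorname{Re}(z)\bigr),\qquad
\operatorname{Re}\varphi_r(z)=\frac{(1+r^2)\operatorname{Re}(z)-r(1+|z|^2)}{|1-rz|^2},
\]
both of which I checked and which are correct, and then derives a clean contradiction from the two non-strict negations: chaining $(1+r^2)\operatorname{Re}(z)\ge r(1+|z|^2)\ge 2\operatorname{Re}(z)$ forces $\operatorname{Re}(z)\le 0$, while $r(1+|z|^2)>0$ forces $\operatorname{Re}(z)>0$. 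This is fully self-contained, avoids any appeal to the circle-to-circle mapping properties of M\"obius transformations, and handles the borderline cases transparently. The paper's proof is shorter to state and perhaps more conceptual (it explains \emph{why} the lemma is true in terms of how $\varphi_r$ distorts circles), whereas yours is more elementary and leaves no gaps to fill. Either is acceptable; your version would arguably be easier for a reader to verify line by line.
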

\begin{proof}
Note first that any circle $\Gamma_a=\{|z|=a\}$
with $a>0$ is mapped by $\varphi_r$ to a circle
symmetric with respect to the real line,
and $\varphi_r(a)<a$.
A straightforward computation shows that
$|\varphi_r(a)-\varphi_r(-a)|=2a\frac{1-r^2}{1-a^2r^2}$
and so the radius of $\varphi_r(\Gamma_a)$ is less than $a$.
Since also $\varphi_r(a)<a$ this implies that $\varphi_r(\Gamma_a)\cap\Gamma_a$
is either empty or is contained in $\{Re(z)<0\}$.
\end{proof}

\begin{lem}\label{lem:identify diff balls}
Let $r_j,s_j$ be real numbers for $j=1,2$ with $s_j>r_j+1$.  Then there exists a sequence
\begin{equation}
\phi_\nu:\overline{\mathbb B^n}\cup l_{1,s_1}\cup\overline{B_{r_1}(s_1)}\rightarrow\mathbb C^n
\end{equation}
of injective holomorphic maps
such that the following hold:
\begin{itemize}
\item[(i)] $\phi_\nu\rightarrow Id$ uniformly on $\overline{\mathbb B^n}$,
\item[(ii)] $\phi_\nu(z)\rightarrow \psi(z)=p_{s_2}+\frac{r_2}{r_1}(z-p_{s_1})$ uniformly on $\overline {B_{r_1}(p_{s_1})}$,
\item[(iii)]  $\phi_\nu$ converges uniformly on $l_{1,s_1}$ to a smooth embedding of $l_{1,s_1}$ onto $l_{1,s_2}$.
\end{itemize}
Moreover, the maps $\phi_\nu$ are of the form $\phi_\nu(z)=(h_\nu(z_n)\cdot z',f_\nu(z_n))$, where $h_\nu,f_\nu$
are holomorphic functions in one variable.
Finally, the maps $\phi_\nu$ can be chosen to match $\psi$ to any given order at the point $p_{s_1+r_1}$.
\end{lem}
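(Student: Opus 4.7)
The plan is to exploit the product form $\phi_\nu(z)=(h_\nu(z_n)\cdot z',f_\nu(z_n))$ to reduce the construction to two sequences of one-variable holomorphic functions $h_\nu,f_\nu$ on a neighborhood of the planar ``dumbbell''
\[
K:=\overline{\mathbb D}\cup l_{1,s_1}\cup\overline{D_{r_1}(s_1)}\subset\mc.
\]
Under this ansatz, $\phi_\nu$ is injective exactly when $f_\nu$ is injective on $K$ and $h_\nu$ is nonvanishing there. The required limits are $f_\nu\to\mathrm{id}$ on $\overline{\mathbb D}$, $f_\nu\to\alpha(z):=s_2+(r_2/r_1)(z-s_1)$ on $\overline{D_{r_1}(s_1)}$, $h_\nu\to 1$ on $\overline{\mathbb D}$, $h_\nu\to r_2/r_1$ on $\overline{D_{r_1}(s_1)}$, and $f_\nu|_{l_{1,s_1}}$ converging to a smooth embedding of $l_{1,s_1}$ onto $l_{1,s_2}$.

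For $f_\nu$ I would follow the Riemann-mapping strategy parallel to Lemma~\ref{lem:Riemann map convergence}. Let $\Omega_\nu\subset\mc$ be simply connected, real-symmetric open thickenings of $K$ obtained by joining the slightly enlarged disks $D_{1+\eta_\nu}(0)$ and $D_{r_1+\eta_\nu}(s_1)$ by a thin tube of width $\delta_\nu$ around $l_{1,s_1}$, with $\delta_\nu,\eta_\nu\searrow 0$, and let $\tilde\Omega_\nu$ be analogous thickenings of $K_2:=\overline{\mathbb D}\cup l_{1,s_2}\cup\overline{D_{r_2}(s_2)}$. Take $f_\nu\colon\Omega_\nu\to\tilde\Omega_\nu$ to be the Riemann map normalized by $f_\nu(0)=0$ and $f_\nu'(0)>0$. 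Real-symmetry of the two domains forces $f_\nu(\bar z)=\overline{f_\nu(z)}$, so $f_\nu$ sends the real axis into itself monotonically. Injectivity of $\phi_\nu$ then comes for free from $f_\nu$ being a biholomorphism onto $\tilde\Omega_\nu$.

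The convergence verification proceeds as follows. As $\nu\to\infty$ the Carath\'eodory kernel of $\Omega_\nu$ based at $0$ degenerates to $\mathbb D$ (the right disk pinches off as the tube shrinks), and Carath\'eodory's kernel-convergence theorem combined with the normalization gives $f_\nu\to\mathrm{id}$ uniformly on $\overline{\mathbb D}$ via the boundary regularity of the disk. The kernel of $\Omega_\nu$ based at $s_1$ degenerates to $D_{r_1}(s_1)$; real-symmetry and orientation pin down the prime-end correspondence at the neck $s_1-r_1\leftrightarrow s_2-r_2$, and the unique conformal map $D_{r_1}(s_1)\to D_{r_2}(s_2)$ with this boundary correspondence and positive real derivative is exactly $\alpha$, so $f_\nu\to\alpha$ uniformly on $\overline{D_{r_1}(s_1)}$. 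Since $f_\nu$ is monotone on $l_{1,s_1}\subset\mr\cap\Omega_\nu$ with $f_\nu(1)\to 1$ and $f_\nu(s_1)\to s_2$, the image $f_\nu(l_{1,s_1})$ converges to $l_{1,s_2}$ as smooth embeddings.

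The function $h_\nu$ is built by Mergelyan on $K$: the continuous target $H$ equal to $1$ on $\overline{\mathbb D}$, $r_2/r_1$ on $\overline{D_{r_1}(s_1)}$, and a positive continuous interpolation on $l_{1,s_1}$ is holomorphic on the interior of $K$, and $\mc\setminus K$ is connected, so polynomials $h_\nu\to H$ exist uniformly on $K$; in particular $h_\nu>0$ on $K$ for large $\nu$. For the jet-matching at $p_{s_1+r_1}$, the point $s_1+r_1$ lies in the interior of the enlarged right disk of $\Omega_\nu$, so Cauchy's estimates give $C^k$-convergence of $h_\nu-r_2/r_1$ and $f_\nu-\alpha$ to zero near $s_1+r_1$; subtracting from $h_\nu,f_\nu$ the order-$k$ Taylor polynomials of these errors at $s_1+r_1$ enforces exact agreement to the prescribed order, while the corrections tend to zero uniformly on compact sets and preserve every earlier conclusion. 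The hardest technical point is the rigorous proof of $f_\nu\to\alpha$ on the right disk: it requires control of the pinched Riemann maps beyond the naive Carath\'eodory statement, via the theory of prime-end correspondence (or equivalently an extremal-length argument) at the degenerating neck of the tube.
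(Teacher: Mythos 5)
Your ansatz $\phi_\nu=(h_\nu(z_n)\cdot z',f_\nu(z_n))$, and your construction of $h_\nu$ by Mergelyan approximation of the piecewise-constant target on the planar dumbbell, coincide with the paper's. Where you diverge is in the construction of $f_\nu$: the paper again uses Mergelyan, approximating the explicit piecewise target (identity on $\overline{\mathbb D}$, the affine map $\alpha$ on $\overline{D_{r_1}(s_1)}$, a fixed smooth monotone embedding on $l_{1,s_1}$) directly by one-variable functions, whereas you try to obtain $f_\nu$ as a Riemann map between two pinching dumbbell domains $\Omega_\nu\to\tilde\Omega_\nu$, in the spirit of Lemma~\ref{lem:Riemann map convergence}. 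This is a genuinely different route, and it is both harder and, as written, incomplete. You acknowledge that $f_\nu\to\alpha$ on $\overline{D_{r_1}(s_1)}$ is ``the hardest technical point,'' but the gap is more serious than a deferred citation. Carath\'eodory kernel convergence based at $0$ gives (i), but the right disk sees a Riemann map normalized at $0$, not at $s_1$: writing $f_\nu=\tilde u_\nu\circ(\tilde m_\nu^{-1}\circ m_\nu)\circ u_\nu^{-1}$, where $u_\nu,\tilde u_\nu$ are the Riemann maps normalized at $s_1,s_2$ and $m_\nu,\tilde m_\nu$ are the M\"obius maps relating the two normalizations, the conclusion (ii) requires $\tilde m_\nu^{-1}\circ m_\nu\to\mathrm{id}$, i.e.\ that the two necks degenerate at \emph{matched} rates. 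Since the necks of $\Omega_\nu$ and $\tilde\Omega_\nu$ have different lengths ($s_1-r_1-1$ versus $s_2-r_2-1$), this is a nontrivial extremal-length matching condition on your choice of $\delta_\nu$ that you never arrange. Moreover (iii) requires convergence to a \emph{smooth} embedding on the segment; monotonicity plus convergence of endpoints only gives, via Helly, a subsequential monotone limit that need not be smooth, and derivative control through the pinching neck is exactly where the Riemann map behaves badly. The paper's Mergelyan route makes (ii) and (iii) trivial by fiat (the limit on the segment is a target you choose), and that is why it is the right tool here. Note that Lemma~\ref{lem:Riemann map convergence}, which you are modeling on, deliberately avoids this problem by using \emph{two} Riemann maps $f_\nu,g_\nu$, one normalized at each disk, and never asserting that a single map converges on both.

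One genuine merit of your route is that the Riemann map gives injectivity of $f_\nu$ automatically, while the paper's one-line appeal to Mergelyan implicitly assumes the approximants can be taken injective on a neighborhood of the dumbbell; and your device for the final jet-matching clause (subtract the Taylor polynomial of the error at $s_1+r_1$, using Cauchy estimates in the interior of the right disk to see the corrections vanish) is standard and correct. But as a whole the proposal does not establish (ii) or (iii), so it does not prove the lemma.
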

\begin{proof}
By Mergelyan's theorem there exists one variable functions $f_\nu(z_n)$ satisfying
(i)-(iii) on the intersection with the $z_n$-axis.  Again by Mergelyan's theorem there
exist a sequence $h_\nu(z_n)$ converging to the constant function $r_2/r_1$ on
the disk of radius $r_1$ centred at $s_1$, to $1$ on the closed unit disk, and
some fixed real function increasing/decreasing from $r_2/r_1$ to $1$ on the line segment
connecting the two disks.
\end{proof}

We now give the proof of Theorem \ref{thm:exposing ball case}.
\begin{proof}
We prove it first without the parameter $t$, \emph{i.e.}, we prove
the existence of the sequence $\phi_{\nu,1}$.  By Lemma \ref{lem:identify diff balls},
it is enough to prove this for some special case of $r, s$.
We take the special case that $r=1.5, s=3$.
Let $f_{\nu}$ be maps furnished by Lemma \ref{lem:Riemann map convergence} with $a=0, b=3.5$.
We define $\phi_{\nu}(z):=(z_1, \cdots, z_{n-1}, f_{\nu}(z_n))$.
Then the properties (ii), (iii), (v) and (vi) hold (we have that (vi) holds because $f_\nu^{-1}$ is approximable by entire maps).  \

We now prove (iv).  Since $\epsilon$ can be chosen arbitrarily small, the only place where the inclusion in (iv)
can fail is near the point $p_{4.5}$.
Consider first the sequence
$$
\tilde\phi_\nu(z)=(z_1, \cdots, z_{n-1}, g_{\nu}(z_n)).
$$
Then $\tilde\phi_\nu(z)\rightarrow z + p_b$ uniformly on $\overline{\mathbb B^n}\setminus B_\epsilon(p_{-1})$,
and since $g_\nu'(1)$ is real (and goes to 1) we have that $\tilde\phi({\mathbb B^n}\setminus B_\epsilon(p_{-1}))$
is eventually contained in $B_{1.5}(p_3)$.  Now let $z=(z_1,...,z_n)$ be a point in $\overline{\mathbb B^n}\cap B_\epsilon(p_1)$.
If $\mathrm{Re}(m_\nu(z_n))\leq 0$ then $\phi_\nu(z)$ is far away from the point $p_{4.5}$ since $\phi_\nu(z)=\tilde\phi_\nu(z_1,...,z_{n-1},g_\nu(m_\nu(z_n)))$.  If $\mathrm{Re}(m_\nu(z_n))>0$ it follows from Lemma \ref{lem:estimate mobus map}
that $(z_1,...z_{n-1},m_\nu(z_n))\in\mathbb B\setminus B_\epsilon(p_{-1})$, and so $\phi_\nu(z)\in B_{1.3}(3)$.  \

To construct isotopies we simply define
$$
\phi_{\nu,t}(z)=\frac{1}{t}\phi_\nu(tz).
$$
Note first that we can choose $\epsilon>0$ arbitrarily small, and that by the construction, $\mathbb B^n\cap B_\epsilon(p_1)$
gets mapped by $\phi_\nu$ into a relatively compact subset $\tilde U$ of $U$ which is independent of $\epsilon$.
Write
$$
\phi_{\nu}(z)=A_\nu(z) + G_\nu(z),
$$
where $A_\nu\rightarrow\mathrm{Id}$ and $\|G_\nu(z)\|\leq\delta_\nu\|z\|^2$ for $\|z\|<1-\epsilon$ where $\delta_\nu\rightarrow 0$.
Consider a point $z\in B_\epsilon(p_1)\cap\mathbb B^n$.  If $t<1-\epsilon$ then $\phi_{\nu,t}(z)=A_\nu z + \frac{1}{t}G_\nu(tz)$,
with $\|G_\nu(tz)\|\leq\delta_\nu t^2\|z\|^2$, so if $\nu$ is large we are in $U$.  If $t\geq 1-\epsilon$ we consider
two cases.  If $tz\notin B_\epsilon(p_1)$ we may assume that $\phi_\nu$ is as close to the identity as we like,
and so we are still in $U$.   If $tz\in B_\epsilon(p_1)$ then $\phi_\nu$ maps $tz$ into $\tilde U$, and $\frac{1}{t}\tilde U\subset U$
provided $\epsilon$ was chosen small enough.
\end{proof}

\section{Exposing a single boundary point}\label{sec:expose single point}

The aim of this section is to prove Theorem \ref{thm:exposing manifolds}.
Recall that a pair $(A,B)$ of compact sets in a complex space $X$
is called a \emph{Cartan pair} if
$A, B, A\cap B, A\cup B$ all admit Stein neighborhood bases in $X$
and $\overline{A\backslash B}\cap \overline{B\backslash A}=\emptyset$.
The following lemma due to Forstneri\v{c} will be used to (approximately)
glue locally defined exposing maps to define global ones.

%
%
\begin{lem} [\cite{Forstneric03}\cite{Forstneric13}]\label{lem:gluing tech}
Assume that $X$ is a complex space and $X'$ is a closed complex subvariety
of $X$ containing the singular locus $X_{\mathrm{sing}}$.
Let $(A, B)$ be a Cartan pair in $X$ such that
$C:=A\cap B\subset X\backslash X'$.
For any open set $\tilde C\subset X$ containing $C$
there exist open sets $A'\supset A, B'\supset B, C'\supset C$ in $X$,
with $C'\subset A'\cap B'\subset\tilde C$,
satisfying the following property.
For every number $\eta>0$ there exists a number $\epsilon_\eta>0$
such that for each holomorphic map $\gamma:\tilde C\rightarrow X$
with $\text{dist}_{\tilde C}(\gamma, \text{Id})<\epsilon_\eta$
there exist biholomorphic maps
$\alpha=\alpha_\gamma: A'\rightarrow \alpha(A')\subset X$ and
$\beta=\beta_\gamma:B'\rightarrow\beta(B')\subset X$
satisfying the following properties:
\begin{itemize}
\item[(i)] $\gamma\circ\alpha=\beta$ on $C'$,
\item[(ii)]$\text{dist}_{A'}(\alpha, \text{Id})<\eta$ and $\text{dist}_{B'}(\beta, \text{Id})<\eta$, and
\item[(iii)] $\alpha$ and $\beta$ are tangent to the identity map to any given finite order
             along the subvariety $X'$ intersected with their respective domain.
\end{itemize}
Moreover, the maps $\alpha_\gamma$ and $\beta_\gamma$ can be chosen to depend continuously on $\gamma$
such that $\alpha_{\text{Id}}=\text{Id}$ and $\beta_{\text{Id}}=\text{Id}$.
\end{lem}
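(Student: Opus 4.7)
The plan is to reduce the splitting of the nonlinear cocycle $\gamma$ into $\beta\circ\alpha^{-1}$ to a Cartan-type additive splitting at the linear level, and then upgrade to an exact nonlinear solution by a Newton iteration with geometric convergence. First I would choose a Stein neighborhood $W$ of $A\cup B$ and embed it as a closed complex subvariety of some $\mathbb C^N$; through this embedding, holomorphic maps into $W$ that are $C^0$-close to the inclusion are encoded by $\mathbb C^N$-valued displacements, and composition and inversion become analytic operations on displacements. Fix slightly enlarged neighborhoods $A'\supset A$, $B'\supset B$, $C'\supset C$ with $C'\subset A'\cap B'\subset\tilde C$, together with a nested shrinking used in the iteration.

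The linear step is a bounded additive Cartan splitting: there exist bounded linear operators $\mathcal A:\mathcal O(\tilde C,\mathbb C^N)\to\mathcal O(A',\mathbb C^N)$ and $\mathcal B:\mathcal O(\tilde C,\mathbb C^N)\to\mathcal O(B',\mathbb C^N)$ satisfying $\mathcal B(c)-\mathcal A(c)=c$ on $C'$. This is obtained in the standard way from a smooth partition of unity $\chi_A+\chi_B=1$ subordinate to $(X\setminus\overline{B\setminus A},\,X\setminus\overline{A\setminus B})$ together with H\"ormander's bounded $\bar\partial$-inverse on $W$, by setting $\mathcal A(c)=-\chi_B c - u$ and $\mathcal B(c)=\chi_A c - u$, where $u$ solves $\bar\partial u = c\cdot\bar\partial\chi_A$. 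To secure condition (iii), I would perform the splitting within the $\mathcal I_{X'}^{k+1}$-valued sheaf; since $c$ is defined only on $\tilde C\subset X\setminus X'$ it is tautologically a section of $\mathcal I_{X'}^{k+1}\otimes\mathcal O^N$ there, and the analogous $\bar\partial$-solution within this sheaf (whose higher cohomology on the Stein $W$ vanishes by Cartan's Theorem B) produces summands vanishing to order $k$ along $X'$.

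Setting $\alpha_0=\mathrm{Id}+\mathcal A(c)$ and $\beta_0=\mathrm{Id}+\mathcal B(c)$, Taylor expansion gives $\beta_0 = \gamma\circ\alpha_0 + O(\|c\|^2)$ on $C'$. I would then iterate by defining $\gamma_1:=\beta_0^{-1}\circ\gamma\circ\alpha_0$ on a slightly shrunken domain, applying the splitting to its displacement, and composing the results with $\alpha_0,\beta_0$; standard Newton estimates give quadratic convergence on a nested chain of neighborhoods whose intersection still contains $A',B',C'$. The limits $\alpha,\beta$ satisfy $\beta=\gamma\circ\alpha$ on $C'$ with total displacement dominated by a constant multiple of $\epsilon_\eta$, and injectivity on $A',B'$ follows from Lemma \ref{lem:diff top} once $\epsilon_\eta$ is small. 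The main obstacle, and the reason for fixing $\mathcal A,\mathcal B$ and all auxiliary data once and for all independently of $\gamma$, is to obtain the continuous dependence $\gamma\mapsto(\alpha_\gamma,\beta_\gamma)$ with normalization $\alpha_{\mathrm{Id}}=\beta_{\mathrm{Id}}=\mathrm{Id}$: each iteration step is then a continuous composition of bounded linear operators with holomorphic composition and inversion in the sup norm (via Cauchy estimates), geometric convergence is uniform on a ball of radius $\epsilon_\eta$ around the identity, and the zero input produces the identity at every stage. The order-$k$ tangency along $X'$ is preserved throughout since both the linear splitting and the Newton correction stay inside $\mathcal I_{X'}^{k+1}$-valued displacements.
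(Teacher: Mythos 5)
The paper does not prove Lemma~\ref{lem:gluing tech}; it cites it from Forstneri\v{c}'s works \cite{Forstneric03} and \cite{Forstneric13}. Your reconstruction follows the canonical argument in those references: a bounded additive Cartan splitting built from a partition of unity and a bounded $\bar\partial$-solution operator on a Stein neighbourhood of $A\cup B$, followed by a Newton-type iteration (equivalently, an implicit function theorem in Banach spaces) on a nested chain of shrinking neighbourhoods to pass from the linearized identity $\mathcal B(c)-\mathcal A(c)=c$ on $C'$ to the exact nonlinear splitting $\gamma\circ\alpha=\beta$, with all auxiliary choices fixed in advance to guarantee continuity of $\gamma\mapsto(\alpha_\gamma,\beta_\gamma)$ and $\alpha_{\mathrm{Id}}=\beta_{\mathrm{Id}}=\mathrm{Id}$.

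The one place your write-up should be tightened is the jet-interpolation step for condition~(iii). Saying that one takes ``the analogous $\bar\partial$-solution within the $\mathcal I_{X'}^{k+1}$-valued sheaf'' glosses over the fact that the Dolbeault resolution computes cohomology of locally free sheaves, not of an arbitrary coherent subsheaf such as $\mathcal I_{X'}^{k+1}$. The clean way to achieve order-$k$ tangency is to observe that $\chi_A c$ and $\chi_B c$ (extended by zero) are supported inside $\tilde C$, hence vanish identically near $X'$; and since $\bar\partial u=c\,\bar\partial\chi_A$ has data supported in $\tilde C$, the solution $u$ is holomorphic in a neighbourhood of $X'$. One then subtracts from $u$ a bounded holomorphic extension of the $k$-jet of $u$ along $X'$, which exists because $H^0(W,\mathcal O)\to H^0(W,\mathcal O/\mathcal I_{X'}^{k+1})$ is surjective on the Stein neighbourhood $W$ and admits a bounded linear right inverse. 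With that correction each iterate lies in the ideal $\mathcal I_{X'}^{k+1}$, and the scheme preserves the required tangency exactly as you intend. The rest of the argument is sound.
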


We start by embedding a neighbourhood of $\gamma$ suitably into $\mathbb C^n$, where we will
define local exposing maps.

\begin{lem}\label{coordinategamma}
There exists an open Stein neighbourhood $W$ of $\gamma$, an
open neighbourhood $U_\zeta$ of $\zeta,  $and a holomorphic
embedding $\phi:W\rightarrow\mathbb C^n$ ($n=\mathrm{dim}(X)$) such that the following holds:
\begin{itemize}
\item[(i)] $\phi(\zeta)=0$,
\item[(ii)] $\phi((U_\zeta\cap K)\setminus\{\zeta\})\subset\{z\in\mathbb C^n:2\mathrm{Re}(z_n)+\|z\|^2<0\}$
\end{itemize}
\end{lem}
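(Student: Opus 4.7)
The plan is to realise $\phi$ as a small holomorphic perturbation of a smooth embedding of a Stein neighbourhood of $\gamma$ that is already matched to a local Narasimhan normalisation of $\rho$ at $\zeta$.  The argument splits into (a) putting $\rho$ into normal form at $\zeta$, (b) building a Stein neighbourhood $W$ of $\gamma$ that embeds holomorphically into $\mc^n$, and (c) gluing (a) and (b) by a $\bar\partial$-correction.

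For (a), I would apply Narasimhan's lemma to $\rho$: since $\zeta\in X_{\mathrm{reg}}$, $\mathrm d\rho(\zeta)\neq 0$, and the Levi form of $\rho$ at $\zeta$ is positive definite, a holomorphic change of variables of degree at most two, carried out in any initial chart, produces coordinates $(w_1,\ldots,w_n)$ with $w(\zeta)=0$ and
\[\rho(w)=2\,\mathrm{Re}(w_n)+\|w\|^2+o(\|w\|^2).\]
On a sufficiently small ball around $\zeta$ one therefore has $\rho(w)\geq 2\,\mathrm{Re}(w_n)+(1-\epsilon)\|w\|^2$; composing with a dilation $w\mapsto cw$ for some $c\in(0,1-\epsilon)$ yields a chart $\psi_\zeta$ on a neighbourhood $U_\zeta$ of $\zeta$ with $\psi_\zeta(\zeta)=0$ and $\psi_\zeta((K\cap U_\zeta)\setminus\{\zeta\})\subset\{z\in\mc^n:2\,\mathrm{Re}(z_n)+\|z\|^2<0\}$.

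For (b), the curve $\gamma$ is a smoothly embedded real arc in the complex manifold $X_{\mathrm{reg}}$, hence totally real, and therefore admits a basis of Stein neighbourhoods.  To also obtain a biholomorphism onto an open subset of $\mc^n$, I would $C^\infty$-approximate $\gamma$ by a real-analytic arc $\tilde\gamma$, complexify it to a holomorphic embedding $\hat\gamma\colon V\to X_{\mathrm{reg}}$ from a thin complex strip $V\supset[0,1]$ in $\mc$, and then apply the Docquier--Grauert tubular neighbourhood theorem inside a Stein neighbourhood of $\hat\gamma(\overline V)$.  Since $\hat\gamma(V)$ is a contractible Stein curve, its holomorphic normal bundle is trivial by the Oka--Grauert principle, so a holomorphic tubular neighbourhood of $\hat\gamma(V)$ is biholomorphic to $V\times B$ for some open ball $B\subset\mc^{n-1}$.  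Choosing $\tilde\gamma$ sufficiently $C^0$-close to $\gamma$ forces $\gamma$ to lie inside this tube, which I take as $W$, together with the resulting initial holomorphic embedding $\Psi\colon W\to\mc^n$.

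For (c), I would interpolate smoothly between $\psi_\zeta$ and $\Psi$ (after translating $\Psi$ to send $\Psi(\zeta)$ to $0$) using a bump function supported in $U_\zeta$ and equal to $1$ in a still smaller neighbourhood of $\zeta$.  The result is a smooth embedding $\tilde\phi\colon W\to\mc^n$ that coincides with $\psi_\zeta$ near $\zeta$, so (i) and (ii) already hold for $\tilde\phi$.  Since $W$ is Stein, $\bar\partial$-theory (Cartan's Theorem B, or an $L^2$-estimate with a plurisubharmonic weight forcing second-order vanishing at $\zeta$) produces $u\colon W\to\mc^n$ with $\bar\partial u=\bar\partial\tilde\phi$, $\|u\|_{C^1(W)}$ arbitrarily small, and vanishing $2$-jet at $\zeta$; then $\phi:=\tilde\phi-u$ is holomorphic, remains an embedding by $C^1$-stability on relatively compact sets, and has the same $2$-jet at $\zeta$ as $\psi_\zeta$, so (i) and (ii) are preserved after a final shrinking of $U_\zeta$.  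The step I expect to require the most care is (b): a generic smooth arc is not itself a holomorphic curve, so one needs the real-analytic approximation and complexification step in order to enter the Docquier--Grauert framework; once $W$ and $\Psi$ are in place the jet-preserving $\bar\partial$-correction in (c) is routine.
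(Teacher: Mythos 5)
Your approach is genuinely different from, and considerably heavier than, the paper's. The paper extends $\gamma$ to a totally real $n$-dimensional manifold $M$ (with boundary) in $X_{\mathrm{reg}}$, chooses a smooth embedding $g\colon M\to\mathbb R^n\subset\mathbb C^n$, and uses the standard Stein-neighbourhood and Mergelyan-type approximation theory for totally real submanifolds to get a holomorphic embedding $\phi_0$ of a Stein neighbourhood $W$ of $\gamma$ into $\mathbb C^n$ as a $C^1$-small perturbation of $g$. Since $\phi_0$ is biholomorphic onto its image, local exposability of $\zeta$ transports to a germ of a strictly pseudoconvex hypersurface $\Sigma$ at $\phi_0(\zeta)$ that touches $\phi_0(U_\zeta\cap K)$ only there; the Narasimhan normalisation of $\Sigma$ is realised as a global $F\in\mathrm{Aut_{hol}}\mathbb C^n$ (a linear map composed with a degree-two shear), and after translating and scaling $\phi=F\circ\phi_0$ gives (i)--(ii). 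No gluing, no $\bar\partial$-correction, no Docquier--Grauert, and no complexification of $\gamma$ are needed, precisely because the Narasimhan change of coordinates is available as a $\mathbb C^n$-automorphism and so does not disturb the embedding $\phi_0$ already in hand.

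Your step (c) has a real gap. The maps $\psi_\zeta$ and the translated $\Psi$ both send $\zeta$ to $0$, but their differentials there generally disagree, so with $\tilde\phi=\chi\psi_\zeta+(1-\chi)\Psi$ you get $\bar\partial\tilde\phi=(\bar\partial\chi)(\psi_\zeta-\Psi)$, which on the transition annulus $\{\,|z-\zeta|\sim r\,\}$ has size $(1/r)\cdot r=O(1)$: it does not shrink as the cutoff narrows, so the claimed arbitrarily small $\|u\|_{C^1}$ is unjustified. There is not even a reason for $\tilde\phi$ to be an immersion in the transition region --- a convex combination of two embeddings with different linear parts at the common fixed point need not be one (compare $\psi_\zeta(z)=z$ with $\Psi(z)=-z$). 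To repair this you would have to match $\psi_\zeta$ and $\Psi$ to higher order at $\zeta$ by post-composing $\Psi$ with a polynomial automorphism of $\mathbb C^n$; but once you allow that, the Narasimhan automorphism already turns $\Psi$ into a map with (i)--(ii), and the whole cutoff-and-$\bar\partial$-correction apparatus is redundant. Step (b), while workable, is likewise a heavy substitute (real-analytic approximation, complexification, Docquier--Grauert, Oka--Grauert triviality of the normal bundle) for the single fact that a totally real submanifold has a Stein neighbourhood on which holomorphic maps $C^1$-approximate smooth ones.
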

\begin{proof}
First let $M\subset X$ be a totally real manifold (with boundary) of dimension $n$ that contains the curve $\gamma$,
and choose a smooth embedding $g:M\rightarrow \mathbb R^n\subset\mathbb C^n$.  Then $M$ admits a Stein neighbourhood
$W_0$ such that $g$ may be approximated in $C^1$-norm on $M$ by holomorphic maps $\phi_0:W_0\rightarrow\mathbb C^n$.
So $\phi_0$ may be taken to be an embedding of an open Stein neighbourhood $W$ of $\gamma$ into $\mathbb C^n$.
By assumption there is a (local) strictly pseudoconvex hypersurface $\Sigma\subset\mathbb C^n$ and
an open set $U_\zeta$ such that $\Sigma$ touches $\phi_0(U_\zeta\cap K)$ only at the point $\phi_0(\zeta)$.
Let $F\in\mathrm{Aut_{hol}}\mathbb C^n$ such that $F(\Sigma)$ is strictly convex near $F(\phi_0(\zeta))$.
After a translation and scaling, the map $\phi=F\circ\phi_0$ will ensure the conclusions of the lemma.
\end{proof}

We may now modify $\gamma$ such that near the origin, the curve $\tilde\gamma:=\phi(\gamma)$ coincides
with the line segment $l_{0,r}$ for some $r>0$, and such that $\tilde\gamma$ is perpendicular to the (local)
hypersurface $\tilde H:=\phi(W\cap H)$ where they intersect.  We keep the notation $\gamma$ for
the modified curve.  Let $\gamma_\zeta$ denote the
piece $\phi^{-1}(l_{0,r})$ of $\gamma$.

\begin{lem}
There exists a
a compact set $K'\subset X$ with $K\subset\mathrm{int}(K')$
such that for any $\epsilon>0$ there exists an open neighbourhood $\Omega$
of $K'\cup\gamma_\zeta$
and a holomorphic embedding $\psi:\Omega\rightarrow X$
such that the following holds:
\begin{itemize}
\item[(i)] $\mathrm{dist}(z,\psi(z))<\epsilon$ for all $z\in K'$,
\item[(ii)] $\psi(\gamma_\zeta)\subset V\cap W$,
\item[(iii)] $\psi(\gamma_\zeta)\cap H=\psi(\phi^{-1}(p_r))$ and the intersection is perpendicular
to $H$ in the coordinates furnished by $\phi$.
\end{itemize}
\end{lem}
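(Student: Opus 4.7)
The plan is to construct $\psi$ by gluing the identity on a Stein neighbourhood of $K$ with a near-identity map on the Stein neighbourhood $W$ of $\gamma$ supplied by Lemma \ref{coordinategamma}, using the Forstneri\v{c} gluing lemma (Lemma \ref{lem:gluing tech}). Since $K$ is a Stein compact, first pick a Stein neighbourhood $U_K$ of $K$ and a compact set $K'$ with $K\subset\mathrm{int}(K')\subset K'\subset U_K$, chosen so that $K'\cap W$ is a small compact neighbourhood of $\zeta$ inside $W$. Let $B\subset W$ be a small compact tubular neighbourhood of $\gamma_\zeta$. By slightly adjusting $K'$ and $B$ near $\zeta$, arrange that $(A,B):=(K',B)$ is a Cartan pair in $X$ whose overlap $C=A\cap B$ sits inside $W\cap X_{\mathrm{reg}}$.

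Now apply Lemma \ref{lem:gluing tech} with transition map $\gamma_0=\mathrm{Id}$ on a neighbourhood $\tilde C$ of $C$, taking the closed subvariety $X'$ to contain $X_{\mathrm{sing}}$ together with the regular point $\phi^{-1}(p_r)$ as a zero-dimensional component. For any $\eta>0$ one obtains biholomorphic maps $\alpha:A'\ra X$ and $\beta:B'\ra X$, each within $\eta$ of the identity on their respective domains, with $\alpha=\beta$ on a neighbourhood $C'\supset C$ and with $\beta$ tangent to the identity to first order at $\phi^{-1}(p_r)$. Setting $\psi:=\alpha$ on $A'$ and $\psi:=\beta$ on $B'$ then yields a well-defined holomorphic embedding on $\Omega:=A'\cup B'\supset K'\cup\gamma_\zeta$.

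Conditions (i) and (ii) are routine. Choosing $\eta<\epsilon$ gives (i), since $K'\subset A'$ and $\|\alpha-\mathrm{Id}\|<\eta$ there. For (ii), note that $\gamma_\zeta\subset\gamma\subset V$ by choice of $V$ and $\gamma_\zeta\subset W$ by construction, so $\gamma_\zeta\subset V\cap W$; since $V\cap W$ is open and $\psi=\beta$ is uniformly $\eta$-close to the identity on the compact curve $\gamma_\zeta$, the image $\psi(\gamma_\zeta)$ stays inside $V\cap W$ once $\eta$ is small.

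Condition (iii) is the place where the interpolation clause of Lemma \ref{lem:gluing tech}(iii) is essential: it delivers $\beta(\phi^{-1}(p_r))=\phi^{-1}(p_r)$ and $d\beta(\phi^{-1}(p_r))=\mathrm{Id}$, so in the $\phi$-coordinates the tangent of $\psi(\gamma_\zeta)$ at this endpoint equals the straight, perpendicular-to-$\tilde H$ tangent of $\gamma_\zeta$, giving exact perpendicularity rather than a mere approximation. Combined with the $C^1$-closeness of $\psi(\gamma_\zeta)$ to $\gamma_\zeta$ and the fact that $\gamma_\zeta$ meets $H$ only transversely at $\phi^{-1}(p_r)$, the image curve $\psi(\gamma_\zeta)$ then meets $H$ in exactly the single transverse point $\psi(\phi^{-1}(p_r))$. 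The main technical hurdle is purely geometric: arranging the Cartan pair so that the overlap $C$ lies inside $W$ and avoids $X_{\mathrm{sing}}$, after which the gluing lemma together with its interpolation feature does essentially all of the work.
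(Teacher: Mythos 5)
Your proposal has a fundamental gap: it produces a map $\psi$ that is uniformly close to the identity on \emph{all} of $\Omega$, including along $\gamma_\zeta$, and such a map cannot satisfy condition (iii). Recall $\gamma_\zeta=\phi^{-1}(l_{0,r})$ is only the initial piece of $\gamma$ near $\zeta$; its terminal point $\phi^{-1}(p_r)$ lies strictly in the interior of $\gamma$ and hence in $X\setminus(K\cup H)$, since $\gamma$ touches $H$ only at $\gamma(1)$. So a near-identity $\psi$ — let alone one that you arrange to fix $\phi^{-1}(p_r)$ via the jet-interpolation clause — gives $\psi(\gamma_\zeta)\cap H=\emptyset$, not $\psi(\phi^{-1}(p_r))$. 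You have implicitly assumed that $\gamma_\zeta$ already meets $H$ perpendicularly (you speak of "the straight, perpendicular-to-$\tilde H$ tangent of $\gamma_\zeta$"), which is false; that perpendicularity holds only at the far end of the full modified curve $\tilde\gamma$, well beyond $p_r$.

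The point of the lemma is precisely that $\psi$ must be \emph{far} from the identity along $\gamma_\zeta$ — it needs to stretch $\gamma_\zeta$ outward to reach $H$ — while remaining $\epsilon$-close to the identity only on the Stein compact $K'$. Consequently the transition map fed into Lemma \ref{lem:gluing tech} cannot be the identity. The paper's argument builds a smooth embedding $\tilde g$ on a neighbourhood of $\overline B_{r/2}(0)\cup l_{0,r}$ which equals the identity near $\overline B_{r/2}(0)$, stretches $l_{0,r}$ to cover $\tilde\gamma$, pulls $\tilde H$ back to a hypersurface perpendicular to $l_{0,r}$, and is $\overline\partial$-flat along $l_{0,r}$; this is approximated in $C^1$ on $\overline B_{r/2}(0)\cup l_{0,r}$, with jet interpolation at $p_r$, by a holomorphic embedding $\tilde h$, and $h:=\phi^{-1}\circ\tilde h\circ\phi$ is then used as the gluing data. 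Because $h$ is close to the identity on the overlap $C_\tau$ (which sits inside $\phi^{-1}(B_{r/2}(0))$, chosen via sublevel sets of the extended plurisubharmonic function $\rho=\mathrm{Re}(\phi_n)$), Lemma \ref{lem:gluing tech} applies and one sets $\psi:=h\circ\alpha$ on $A_\tau$ and $\psi:=\beta$ on $B_\tau$. The stretching is carried by $h$, the near-identity behaviour on $K'\subset B_\tau\cup\phi^{-1}(B_{r/2})$ is carried by $\alpha,\beta$, and perpendicularity comes from the jet interpolation of $\tilde h$ at $p_r$. Without a non-trivial transition map of this kind, the gluing step is vacuous and (iii) cannot be achieved.
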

\begin{proof}
We will first define a map $\tilde\psi$ accomplishing (i)-(iii) in the local coordinates furnished by $\phi$,
and then we will (approximately) glue the map $\phi^{-1}\circ\tilde\psi\circ\phi$ to the identity
map on $K'$.    \

We start by defining a suitable Cartan pair.  Writing  $\phi=(\phi_1,...,\phi_n)$ note that
the pluriharmonic function  $\rho:=\mathrm{Re}(\phi_n)$ extends to a plurisubharmonic function on
an open neighbourhood of $K$ which agrees with $\rho$ on an open neighbourhood of $\zeta$,
and which is negative and uniformly bounded away from zero away from where they agree.
We keep the notation $\rho$ for the extended function.
Then for a sufficiently small smoothly bounded strictly pseudoconvex open neighbourhood $\Omega'$ of $K\cup\gamma_\zeta$
and sufficiently small $\tau>0$, the sets
$$
A_\tau=\overline\Omega'\cap\{\rho\geq-2\tau\} \mbox{ and }  B_\tau=\overline\Omega'\cap\{\rho\leq -\tau\}
$$
define a Cartan pair $(A_\tau,B_\tau)$.  Let $\tilde A_\tau=\phi(A_\tau)$ and $\tilde B_\tau=\phi(B_\tau\cap W)$.  If $\Omega'$
and $\tau$ are chosen sufficiently small we have that $A_\tau\cap B_\tau\subset W$ and  $\tilde A_\tau\cap \tilde B_\tau\subset B_{r/2}(0)$.  Set $\tilde C_\tau=B_{r/2}(0)$.
Fix $K'$ a Stein compact such that $K'\subset B_\tau\cup\phi^{-1}(B_{r/2})$.  \

Now let $D$ be an open neighbourhood of $\overline B_{r/2}(0)\cup l_{0,r}$ with $\tilde g:D\rightarrow\mathbb C^n$
a smooth embedding such that $\tilde g=\mathrm{id}$ near $\overline B_{r/2}(0)$, $\tilde g$ stretches
$l_{0,r}$ to cover $\tilde\gamma$, $\tilde g^{-1}(\tilde H)$ is perpendicular to $l_{0,r}$, and such that
$\tilde g$ is $\overline\partial$-flat to order one along $l_{0,r}$.   Then $\tilde g$ is
uniformly approximable on $\overline B_{r/2}(0)\cup l_{0,r}$ in $\mathcal C^1$-norm with jet interpolation
at the point $p_r$.   So there exists a holomorphic embedding $\tilde h:\overline B_{0,r/2}(0)\cup l_{0,r}\rightarrow\mathbb C^n$,
as close to the identity as we like on $\overline B_{r/2}(0)$, with the image $\tilde h(l_{0,r})$ as close
as we like to $\tilde\gamma$ and with $\tilde h^{-1}(\tilde H)$ perpendicular to $l_{0,r}$.  Set $h:=\phi^{-1}\circ\tilde h\circ\phi$.
Now if $\tilde h$ is close enough to the identity on $B_{r/2}$ we have that Lemma \ref{lem:gluing tech}
furnishes maps $\alpha$ and $\beta$ such that the map $\psi$ defined as $\psi:=h\circ\alpha$ on $A_\tau$
and $\psi=\beta$ on $B_\tau$ will satisfy the claims of the lemma.

\end{proof}

\emph{Proof of Theorem \ref{thm:exposing manifolds}:}  We use the extended function $\rho$ from
the proof of the previous lemma to define a Cartan pair:
$$
A_\tau=K\cap\{\rho\geq-2\tau\} \mbox{ and }  B_\tau=K\cap\{\rho\leq -\tau\}
$$
Then for $\tau$ small we have that $\phi$ maps $A_\tau$ into $B_1(p_{-1})\cup\{0\}$, and
$C_\tau=A_\tau\cap B_\tau$ gets mapped into the ball $B_1(p_{-1})$.  Choose a small
$\delta>0$ such that the ball $B_\delta(p_{r-\delta})$ touches $\tilde\psi^{-1}(H)$ only
at the point $p_r$.  Now Theorem \ref{thm:exposing ball case} applied
to the dumbbell $B_1(p_{-1})\cup l_{0,r-\delta}\cup B_\delta(p_r)$ furnishes local
exposing maps $\tilde\phi_\nu:\overline B_1(p_{-1})\rightarrow\mathbb C^n$ such
that the maps $\phi_\nu:=\phi^{-1}\circ\tilde\phi_\nu\circ\phi$ may be approximately glued to
the identity map over a neighbourhood of $C_\tau$ to conclude the proof of the first part of Theorem \ref{thm:exposing manifolds}
by setting $f=\psi\circ\phi_\nu$.  \

For the last part we now assume that $K\subset\mathbb C^n$ is polynomially convex.   Let $\rho$
be a non-singular strictly plurisubharmonic function on an open neighbourhood $U_\zeta$ of $\zeta$
such that $\rho(\zeta)=0$ and $\rho<0$ on $(K\cap U_\zeta)\setminus\{\zeta\}$.  Then
for any sufficiently small Runge and Stein neighbourhood $\Omega$ of $K$ the function
$\rho$ extends to a plurisubharmonic function on $\Omega$ which is strictly negative on $\Omega\setminus U_\zeta$,
and so $\Omega'=\Omega\cap\{\rho<0\}$ is a Runge and Stein domain with $\zeta\in b\Omega$.   Moreover, choosing
$\overline\Omega$ to have a Runge and Stein neighbourhood basis, we have that $\overline\Omega'$ will
have a Runge and Stein neighbourhood basis, and so by Theorem 1.3 in \cite{Diedrich-Fornaess-Wold13}
there exists $F\in\mathrm{Aut_{hol}}\mathbb C^n$ such that $F(K\setminus\{\zeta\})\subset\mathbb B^n$
and $F(\zeta)=p_1$.   Since we can conjugate by $F$ we may as well assume that $K\setminus\{\zeta\}\subset\mathbb B^n$
and that $\zeta=p_1$.  \

My slightly modifying $\gamma$ we may assume that $\gamma$ agrees with $l_{1,1+r}$ near $p_1$
and that it is perpendicular to $H$ where they intersect.  First let $D$ be an open neighbourhood
of $\overline{\mathbb B^n}\cup l_{1,1+r}$ and $g:[0,1]\times D\rightarrow\mathbb C^n$
be an isotopy of embeddings such that $g_0=\mathrm{id}$, $g$ stretches $l_{1,1+r}$
along $\gamma$, $g_t=\mathrm{id}$ near $\overline{\mathbb B^n}$ for all $t$, $g^{-1}(H)$
is perpendicular to $l_{1,1+r}$, and $g_t$ is $\overline\partial$-flat on $l_{1,1+r}$ to order one.  Then
the whole parameter family $g_t$ may be approximated by a parameter family $h:[0,1]\times\mathbb C^n$
of holomorphic maps in $C^1$-norm on $\overline{\mathbb B^n}\cup l_{1,1+r}$ with jet interpolation at
the end point of $l_{1,1+r}$.   So $h_t$ is a family of holomorphic embeddings on some open neighbourhood
of $\overline{\mathbb B^n}\cup l_{1,1+r}$, and by the Anders\'{e}n-Lempert theory $h_1$ may
be approximated by a holomorphic automorphism $G$ with interpolation at the end point of $l_{1,1+r}$. \

Let $\tilde H=G^{-1}(H)$ which is now perpendicular to $l_{1,1+r}$.   Choose $s>0$ such that
$B_{s}(p_{1+r-s})$ touches $\tilde H$ only at a single point.   Now Theorem \ref{thm:exposing ball case}
applied to the dumbbell $\mathbb B^n\cup l_{1,1+r}\cup B_{s}(p_{1+r-s})$ furnishes
a one-parameter family of maps $\phi_t$ such that $G\circ\phi_1$ is
the exposing map we are after, except that $\phi_1$ is not an automorphism.  However, by (vi),
$\phi_t(\overline{\mathbb B^n})$ is polynomially convex for each $t$, and so $\phi_1$ is approximable
by holomorphic automorphisms.  The proof is complete.

$\hfill\square$

\section{Regularity of exposing maps with parameters}\label{sec:parametric exposing}

In this section we will prove Theorem \ref{thm:exposing family manifolds}.  Before
we proceed with the proof, we give a further discussion on parametric exposing.

\begin{defn}\label{def:exposing things}
Let $D$ be a bounded domain in $\mc^n$ with smooth boundary.
Then we say that $D$ satisfies:\\
1) \emph{Property (E)}, if there exists a continuous map
$F:\partial D\times\bar D\rightarrow \mc^n$
such that for each fixed $p\in\partial D$
the map $F_\zeta:=F(\zeta,\cdot):\bar D\rightarrow \mc^n$
is an exposing of $D$ at $\zeta$; or\\
2) \emph{Property (AE)}, if there exists a continuous map
$F:\partial D\times\mc^n\rightarrow \mc^n$
such that for each fixed $\zeta\in\partial D$
we have that $F_\zeta\in\mathrm{Aut_{hol}}\mathbb C^n$
is an exposing of $D$ at $\zeta$; or\\
3) \emph{Property (ASE)}, if there exists a continuous map
$F:\partial D\times\mc^n\rightarrow \mc^n$
such that for each fixed $\zeta\in\partial D$
 we have that $F_\zeta\in\mathrm{Aut_{diff}}\mathbb C^n$
 and $F_\zeta(D)$ is exposed at $F_\zeta(\zeta)$.
\end{defn}

In the case 2) we say that $F_\zeta$ is ambient exposing, and in case 3)
we say that $F_\zeta$ is ambient smoothly exposing.
Note that Property (ASE) has nothing
to do with the complex structure of $D$ and $\mc^n$.
In \cite{Deng-Guan-Zhang13},
the following question was proposed:

\begin{que}
Let $D$ be a bounded strongly
pseudoconvex domain in $\mc^n$ with smooth boundary.
Does $D$ satisfies Property (E)?
\end{que}

The following theorem, which can be viewed as a parametric version of Theorem \ref{thm:exposing manifolds},
partially answers the above question.

\begin{thm}\label{thm:exposing family}
Let $D\subset\mc^n (n>1)$ be a bounded strongly
pseudoconvex domain with smooth boundary.
If $D$ satisfies Property (ASE),
then for any positive numbers $\epsilon, \delta$,
there exists a continuous map $f:\partial D\times\overline D\rightarrow \mc^n$
such that for each $\zeta\in\partial D$:
\begin{itemize}
\item[1)] $f_\zeta(\cdot)=f(\zeta,\cdot):\overline D\rightarrow \mc^n$
   is holomorphic and injective;
\item[2)] $f_\zeta(D)$ is exposed at $f_\zeta(\zeta)$; and
\item[3)] $||f_\zeta(z)-z||<\epsilon$ for $z\in\overline D\backslash B_\delta(\zeta)$,
   where $B_\delta(\zeta)=\{z\in\mc^n; ||z-\zeta||<\delta\}$.
\end{itemize}
In particular,
$D$ satisfies Property (E).
If in addition $\overline D$ is polynomially convex,
then $f$ can be taken to be a smooth map $f:\partial D\times \mc^n\rightarrow \mc^n$
such that $f_\zeta\in Aut(\mc^n)$ for all $\zeta\in\partial D$.
In particular,
$D$ satisfies Property (AE).
\end{thm}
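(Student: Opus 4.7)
The plan is to reduce Theorem \ref{thm:exposing family} to Theorem \ref{thm:exposing family manifolds} by using Property (ASE) to produce a continuous family of curves $\gamma_\zeta$ going from each $\zeta\in\partial D$ to a fixed closed real hypersurface $H$, avoiding $\overline D$ in between. First I would fix a large sphere $H=\{z\in\mc^n:\|z\|=R\}$ with $R$ chosen so that $\overline D\subset B_R(0)$; this will serve as the target hypersurface. The aim is then to build a continuous $\gamma:\partial D\times[0,1]\to\mc^n$ so that each $\gamma_\zeta$ is a smoothly embedded curve with $\gamma_\zeta(0)=\zeta$, $\gamma_\zeta(1)\in H$, and $\gamma_\zeta(t)\in\mc^n\setminus(\overline D\cup H)$ for $t\in(0,1)$.

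To build $\gamma$, I would use the ambient smoothly exposing family $F:\partial D\times\mc^n\to\mc^n$ guaranteed by Property (ASE). For each $\zeta$, the point $F_\zeta(\zeta)$ is an exposed boundary point of $F_\zeta(D)$, so there is a continuously varying outward direction $w_\zeta$ at $F_\zeta(\zeta)$ into the complement of $F_\zeta(\overline D)$. The short curve $s\mapsto F_\zeta(\zeta)+sw_\zeta$ pulls back through $F_\zeta^{-1}$ to a local outgoing arc $\eta_\zeta$ from $\zeta$ into $\mc^n\setminus\overline D$, depending continuously on $\zeta$. Each $\eta_\zeta$ is then extended to an embedded curve $\gamma_\zeta$ ending on $H$ by first flowing along a smoothly extended outward vector field inside a collar of $\partial D$ in $\mc^n\setminus D$ to reach a compact set separated from $\overline D$, and then connecting to $H$ by a continuously chosen arc in $\mc^n\setminus\overline D$; path-connectedness of $\mc^n\setminus\overline D$ (which uses $n>1$) together with continuity of the starting data makes this continuous in $\zeta$.

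With $H$ and $\gamma$ in hand, I would apply Theorem \ref{thm:exposing family manifolds} directly. Choosing the neighbourhood $V'$ inside $\partial D\times B_\delta(\zeta)$ fiberwise, the resulting smooth fiber-preserving map $f:\partial D\times\overline D\to\partial D\times\mc^n$ has each $f_\zeta$ holomorphic and injective with $f_\zeta(\zeta)=\gamma_\zeta(1)\in H$, $f_\zeta(\overline D)\cap H=\{f_\zeta(\zeta)\}$, and $\|f_\zeta(z)-z\|<\epsilon$ for $z\in\overline D\setminus B_\delta(\zeta)$. Since $H$ is a closed real hypersurface touching $f_\zeta(\overline D)$ only at $f_\zeta(\zeta)$, the point $f_\zeta(\zeta)$ is exposed in $f_\zeta(D)$. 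This yields conditions (1)--(3). For the polynomial convexity statement, the final clause of Theorem \ref{thm:exposing family manifolds} upgrades the family to one with $f_\zeta\in\mathrm{Aut}(\mc^n)$ depending smoothly on $\zeta$, giving Property (AE).

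The main obstacle I expect is the second paragraph: producing the full family $\gamma_\zeta$ continuously in $\zeta$. The initial segments $\eta_\zeta$ are easy because (ASE) gives the continuous outward data, but extending each $\eta_\zeta$ all the way to the \emph{same} hypersurface $H$, while keeping $\gamma_\zeta$ embedded and disjoint from $\overline D$ away from its start and from $H$ away from its end, requires an explicit continuous global routing, not merely a pointwise construction. The approach above--flow along a smooth extension of the $w_\zeta$-field inside a tubular neighbourhood, then attach radial segments smoothed at the junction--is the cleanest way I see to handle this uniformly over $\partial D$.
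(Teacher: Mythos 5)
Your overall strategy matches the paper's: reduce to Theorem \ref{thm:exposing family manifolds} by choosing $H$ to be a large sphere $S_R$ and manufacturing a continuous family of embedded arcs $\gamma_\zeta$ from the (ASE) data. However, the step you flag as the main obstacle --- extending the local outgoing arcs $\eta_\zeta$ all the way to $S_R$, continuously in $\zeta$ and staying in $\mc^n\setminus(\overline D\cup H)$ on the open interval --- is where your sketch has a genuine gap, and ``path-connectedness of $\mc^n\setminus\overline D$'' is not enough to fix it. Path-connectedness gives a path for each $\zeta$ but says nothing about continuous choice over the compact parameter space $\partial D$, and the naive candidates (continuing along the outward normal, or going radially toward $S_R$ after a short collar flow) can re-enter $\overline D$ when $D$ is not convex, so a further idea is needed.

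The paper resolves exactly this point with a clean trick that you do not have: it compactifies $F_\zeta$ to a diffeomorphism $G_\zeta$ of $\mc^n$ that equals $F_\zeta$ on $\overline D$ and equals the identity outside a large ball $B_R$. Concretely, it uses the Alexander isotopy $F_{\zeta,t}(z)=\tfrac{1}{t}F_\zeta(tz)$ to connect $F_\zeta$ to the identity, takes the associated time-dependent vector field $X_\zeta(t,z)$, multiplies by a cutoff $\psi$ supported in $B_R$ and equal to $1$ on a ball containing all images $F_{\zeta,t}(\overline D)$, and integrates to get $G_\zeta$, jointly smooth in $\zeta$. Since $G_\zeta(\zeta)$ is an exposed point of $G_\zeta(D)\subset B_R$, the straight segment from $G_\zeta(\zeta)$ along the outward normal $n_\zeta$ of $\partial G_\zeta(D)$ up to its first hit of $S_R$ lies entirely in $B_R\setminus G_\zeta(\overline D)$ (exposedness gives a supporting hyperplane), and its endpoint is fixed by $G_\zeta$ because $G_\zeta=\mathrm{Id}$ near $S_R$. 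Pulling back through $G_\zeta^{-1}$ yields $\gamma_\zeta(t)=G_\zeta^{-1}(G_\zeta(\zeta)+r_\zeta t\,n_\zeta)$, a family of embedded arcs from $\zeta$ to $S_R$ that avoid $\overline D\cup S_R$ on $(0,1)$ and depend smoothly on $\zeta$ --- precisely the input Theorem \ref{thm:exposing family manifolds} needs. From there your final paragraph is correct: apply that theorem with this $\gamma$ and $H=S_R$, shrink $V'$ fiberwise into the $\delta$-balls, and use the polynomial-convexity clause for the (AE) statement.
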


When trying to apply Theorem \ref{thm:exposing family manifolds} to Question 1,
we meet a topological obstruction.
Even we originally just interested in exposing but not ambient exposing,
it turns out that Property (ASE) is needed in our argument.
It is clear that Property (ASE) is a necessary condition for Property (AE),
but we don't know if it is also necessary for Property (E).

For the special case when $\overline D$ is diffeomorphic to the closed unit ball,
we can prove the following

\begin{thm}\label{thm:diff to ball}
Let $D$ be a bounded strongly pseudoconvex domain
in $\mc^n$ with smooth boundary.
Assume that either $D$ is diffeomorphic to the unit ball $B^n$ and $n\geq 3$,
or the closure $\overline D$ of $D$ is diffeomorphic to the closed unit ball and $n=2$.
Then $D$ satisfies Property (E).
If in addition $\bar D$ is polynomially convex,
then $D$ satisfies Property (AE).
\end{thm}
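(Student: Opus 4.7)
The plan is to establish Property (ASE) for $D$ and then quote Theorem \ref{thm:exposing family}, which immediately yields Property (E), and Property (AE) under the polynomial convexity hypothesis. Hence it suffices to produce a continuous family $F:\partial D\times\mc^n\to\mc^n$ of self-diffeomorphisms of $\mc^n$ with each $F_\zeta(\zeta)$ an exposed boundary point of $F_\zeta(D)$.

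The strategy is to first produce a single smooth self-diffeomorphism $\Phi:\mc^n\to\mc^n$ with $\Phi(\overline D)=\overline{\mathbb B^n}$, and then to define
\[
F_\zeta(z):=\Phi(z)+\Phi(\zeta).
\]
Each $F_\zeta$ is a composition of $\Phi$ with a translation, hence a self-diffeomorphism of $\mc^n$; the family depends smoothly on $\zeta$. Since $F_\zeta(\overline D)=\overline{\mathbb B^n}+\Phi(\zeta)$ is a closed unit ball centred at a point of the unit sphere, the triangle inequality places it inside $\{|z|\le 2\}$, with equality attained only at $2\Phi(\zeta)=F_\zeta(\zeta)$. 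Thus the single sphere $\{|z|=2\}$ exposes $F_\zeta(\zeta)$ uniformly in $\zeta$, and (ASE) is established.

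Everything reduces to constructing the global diffeomorphism $\Phi$, and this is where the dimension hypothesis is used. For $n\ge 3$, i.e.\ real dimension $\ge 6$, the hypothesis $D\cong B^n$ together with the smooth embedding $\overline D\subset\mc^n$ forces, via Stallings' characterization of Euclidean space and the smooth $h$-cobordism theorem, that $\partial D\cong S^{2n-1}$ smoothly and $\overline D\cong\overline{\mathbb B^n}$ as an abstract manifold with boundary; the smooth ambient Schoenflies theorem in dimension $\ge 6$ then upgrades this to the desired self-diffeomorphism $\Phi$ of $\mc^n$. The case $n=2$ is where I expect the main obstacle, since smooth Schoenflies is open in dimension four. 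Here the hypothesis $\overline D\cong\overline{\mathbb B^2}$ is imposed directly; the ambient extension will require a more delicate argument, plausibly exploiting the tight contact structure on $\partial D\cong S^3$ (standard by Eliashberg's classification) and the uniqueness of its tight Stein filling, combined with a collar neighborhood argument matching $\partial D$ to the trivial end of $\mc^2\setminus\mathbb B^2$.
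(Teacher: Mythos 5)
Your strategy --- produce a single smooth diffeomorphism $\Phi$ of $\mc^n$ with $\Phi(\overline D)=\overline{\mathbb B^n}$, package it into the translation family $F_\zeta(z)=\Phi(z)+\Phi(\zeta)$ to realize Property (ASE), and then quote Theorem \ref{thm:exposing family} --- is precisely the paper's strategy; the explicit translation device is a clean way to write out a step the paper leaves implicit, and the triangle-inequality check that $F_\zeta(\zeta)=2\Phi(\zeta)$ is the unique contact point with $\{|z|=2\}$ is correct. For $n\geq 3$ your outline (show $\overline D\cong\overline{\mathbb B^n}$ abstractly, then upgrade to an ambient diffeomorphism of $\mc^n$) is morally the paper's: the paper uses the collar theorem to see that $\partial D$ is simply connected, applies the $h$-cobordism theorem to $\overline D$ minus a small ball to conclude $\overline D$ is diffeomorphic to the closed ball, and then invokes the disk theorem (cited from Hirsch) for the ambient upgrade.

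The genuine gap is your treatment of $n=2$. You flag the open smooth Schoenflies problem in real dimension four and propose to route around it via Eliashberg's classification of tight contact structures and uniqueness of Stein fillings; this is both misdirected and, as you yourself signal (``plausibly exploiting\ldots''), not a proof. The four-dimensional smooth Schoenflies problem asks whether a smoothly embedded $S^3\subset\mathbb R^4$ bounds a smooth $D^4$; here the hypothesis for $n=2$ hands you that $D^4$ directly, since $\overline D$ is assumed diffeomorphic to $\overline{\mathbb B^2}$. With an abstract diffeomorphism in hand, the ambient upgrade is the disk theorem of Palais/Cerf --- which carries no dimension restriction --- stating that any two orientation-preserving smooth embeddings of the closed $m$-disk into a connected oriented boundaryless $m$-manifold are ambiently isotopic. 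Applying it to the composite embedding $\overline{\mathbb B^2}\cong\overline D\hookrightarrow\mc^2$ and the standard inclusion yields the required $\Phi\in\mathrm{Aut_{diff}}(\mc^2)$ with $\Phi(\overline D)=\overline{\mathbb B^2}$; this is exactly the result the paper cites from Hirsch and uses uniformly for all $n$. Your Stein-filling detour would at best reconfirm the diffeomorphism type of $\overline D$, which you already assume, and does not by itself produce an ambient diffeomorphism of $\mc^2$. Replace that paragraph with the disk theorem and the rest of your proof closes.
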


\begin{rem}
It is known that any smoothly embedded $S^{n-1}$ in $\mr^n$
bounds a domain that is diffeomorphic to the unit ball except $n\neq 4$ (see \cite{Milnor65}).
The case for $n=4$ is still open.
So for $n\neq 2$, the condition in Theorem \ref{thm:diff to ball}
that $D$ is diffeomorphic to $B^n$ can be replaced by that
$\partial D$ is diffeomorphic to $S^{2n-1}$.
\end{rem}

\subsection{A parametric version of Forstneri\v{c}'s  splitting lemma}
We first introduce some notations.
Let $D$ be a bounded strictly pseudoconvex domain in $\mc^n$
with $C^2$-smooth boundary and $\delta$ be a sufficiently small positive number.
For $\zeta\in\partial D$ we define:
\begin{equation*}
\begin{split}
& A_\zeta=\{z\in\overline D: ||z-\zeta||\leq\delta\},\\
& B_\zeta=\{z\in\overline D: ||z-\zeta||\geq\delta/2\},\\
& C_\zeta=A_\zeta\cap B_\zeta.
\end{split}
\end{equation*}
For a closed subset $X$ in $\mc^n$ and $a>0$,
we set $X(a):=\{z\in\mc^n:||z-w||\leq a\ \text{for\ some}\ w\in X\}$
the $a$-neighborhood of $X$.

The following result is the main result in the thesis of Lars Simon (\cite{Siom14})
\begin{lem}\label{lem:para splitting}
Let $D$ and $\delta$ as above.
If $\tau>0$ is small enough (this depends on $\delta$),
then for any $\eta>0$ there exists $\epsilon>0$ such that the following holds.
If $\mu>5\tau$ and if $\gamma_\zeta$ is a family of injective
holomorphic maps $\gamma_\zeta:C_\zeta(\mu)\ra \mc^n$ satisfying
\begin{itemize}
\item $\gamma:\{(z,\zeta):z\in C_\zeta(\mu), \zeta\in\partial D\}\ra\mc^n,\ (z,\zeta)\mapsto \gamma_\zeta(z)$ is continuous,
\item $dist_{C_\zeta(\mu)}(\gamma_\zeta, Id)<\epsilon$ for all $\zeta\in\partial D$,
\end{itemize}
then there exist families $\{\alpha_\zeta\}_{\zeta\in\partial D}$ and $\{\beta_\zeta\}_{\zeta\in\partial D}$
of injective holomorphic maps $\alpha_\zeta:A_\zeta(2\tau)\ra\mc^n$
and $\beta_\zeta:B_\zeta(2\tau)\ra\mc^n$ with the following properties:
\begin{itemize}
\item[(1)] for all $\zeta\in\partial D$ we have $\gamma_\zeta=\beta_\zeta\circ\alpha^{-1}_\zeta$ on $C_\zeta(\tau)$,
\item[(2)] $dist_{A_\zeta(2\tau)}(\alpha_\zeta, Id)<\eta$ and $dist_{B_\zeta(2\tau)}(\beta_\zeta, Id)<\eta$,
\item[(3)] the maps $\alpha$ and $\beta$ are continuous, where
\begin{equation*}
\begin{split}
&\alpha: \{(z,\zeta)\in\mc^n\times\partial D:z\in A_\zeta(2\tau)\}\ra\mc^n,\ (z,\zeta)\mapsto \alpha_\zeta(z),\\
&\beta:  \{(z,\zeta)\in\mc^n\times\partial D:z\in B_\zeta(2\tau)\}\ra\mc^n,\ (z,\zeta)\mapsto \beta_\zeta(z).
\end{split}
\end{equation*}
\end{itemize}
Moreover, the maps $\alpha_\zeta$ may be constructed to match the identity to any given order at $\zeta$,
and if we are given a parameter family $\gamma_{\zeta,t}$ with $t\in [0,1]$ we may
obtain $\alpha_{\zeta,t}$ and $\beta_{\zeta,t}$ also jointly continuous in $(\zeta,t)$.
\end{lem}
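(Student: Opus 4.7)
The plan is to promote the non-parametric splitting construction of Forstneri\v{c} (the one underlying Lemma \ref{lem:gluing tech}) to depend continuously on the parameter $\zeta\in\partial D$ (and on $t$ when the input family itself depends on $t$). Writing $\gamma_\zeta=\mathrm{Id}+c_\zeta$, the conclusion $\gamma_\zeta=\beta_\zeta\circ\alpha_\zeta^{-1}$ on $C_\zeta(\tau)$ is equivalent to finding holomorphic $a_\zeta$ on $A_\zeta(2\tau)$ and $b_\zeta$ on $B_\zeta(2\tau)$, continuous in $\zeta$, with $\mathrm{Id}+b_\zeta=\gamma_\zeta\circ(\mathrm{Id}+a_\zeta)$ on $C_\zeta(\tau)$ and $\|a_\zeta\|,\|b_\zeta\|\lesssim\|c_\zeta\|$. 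Linearizing gives the additive Cousin problem $b_\zeta-a_\zeta=c_\zeta$, so the heart of the argument is to build a bounded linear splitting operator
\[
T_\zeta:\mathcal O(C_\zeta(\mu),\mc^n)\longrightarrow\mathcal O(A_\zeta(2\tau),\mc^n)\oplus\mathcal O(B_\zeta(2\tau),\mc^n)
\]
with operator norm bounded uniformly in $\zeta\in\partial D$ and depending continuously on $\zeta$.

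To construct $T_\zeta$, first choose a smoothly bounded strictly pseudoconvex open neighbourhood $\Omega_\zeta\supset A_\zeta\cup B_\zeta$ varying continuously in $\zeta$ (take a fixed small outward push of $D$, truncated by a $\zeta$-continuous family of half-spaces). Next pick a smooth cutoff $\chi_\zeta$ on $\Omega_\zeta$, continuous in $\zeta$, equal to $1$ near $B_\zeta\setminus C_\zeta$ and to $0$ near $A_\zeta\setminus C_\zeta$; then $f_\zeta:=\chi_\zeta c_\zeta$ extends smoothly by zero to $\Omega_\zeta$. Solve $\bar\partial u_\zeta=\bar\partial f_\zeta$ on $\Omega_\zeta$ by a continuous family of Henkin-type integral operators on the strictly pseudoconvex domains $\Omega_\zeta$; these operators have $C^0$ norms bounded uniformly in $\zeta$ because the geometry of $\Omega_\zeta$ is a continuous deformation. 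Setting $a_\zeta=f_\zeta-u_\zeta$ and $b_\zeta=(f_\zeta-c_\zeta)-u_\zeta$ yields holomorphic splittings on $A_\zeta(2\tau)$ and $B_\zeta(2\tau)$, with $b_\zeta-a_\zeta=c_\zeta$ on $C_\zeta(2\tau)$ and uniform estimates. With $T_\zeta$ in hand, the nonlinear equation is solved by the standard Newton iteration: set $(a_\zeta^{(0)},b_\zeta^{(0)})=T_\zeta c_\zeta$ and correct by applying $T_\zeta$ to the defect $c_\zeta\circ(\mathrm{Id}+a_\zeta^{(k)})-c_\zeta+(a_\zeta^{(k)}-b_\zeta^{(k)})$. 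A Cauchy estimate on derivatives gives quadratic convergence once $\epsilon<\epsilon_\eta$ is smaller than a constant times $\|T_\zeta\|^{-1}$; uniform continuity of $T_\zeta$ then transfers to uniform continuity of the fixed-point limits $\alpha_\zeta,\beta_\zeta$ in $\zeta$ (and jointly in $(\zeta,t)$ if a parameter is present, since the whole construction is functorial in the input). Jet interpolation at $\zeta$ is obtained a posteriori: since $\zeta\in A_\zeta$ and $\zeta\notin B_\zeta(2\tau)$ (for $\tau<\delta/4$), one replaces $(a_\zeta,b_\zeta)$ by $(a_\zeta-P_\zeta,b_\zeta-P_\zeta)$ where $P_\zeta$ is the Taylor polynomial of $a_\zeta$ at $\zeta$ of the prescribed order; this preserves $b_\zeta-a_\zeta=c_\zeta$ and, since $\overline D$ has bounded diameter and the coefficients of $P_\zeta$ are controlled by $\|a_\zeta\|$ via Cauchy estimates, does not destroy the uniform norm bounds.

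The main obstacle I expect is the uniform construction of the parametric splitting operator $T_\zeta$ with $\zeta$-independent constants: one must arrange the enlarged Stein neighbourhoods $\Omega_\zeta$, the cutoffs $\chi_\zeta$, and the $\bar\partial$-integral operator so that all three vary continuously in $\zeta$ and yield bounds independent of $\zeta$. This is the reason the hypothesis $\mu>5\tau$ appears and why $\tau$ must be small relative to $\delta$: it gives enough room between the sets $A_\zeta,B_\zeta$ and their enlargements to fit the cutoff and the integral kernels without $\zeta$-dependent loss. Once this uniform parametric Henkin estimate is set up, the rest of the proof is a parameter-uniform rerun of the standard splitting argument, and joint continuity in $t$ is automatic because no step makes a non-canonical choice depending on the input.
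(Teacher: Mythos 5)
The paper itself offers no proof of this lemma: it is stated as the main result of Lars Simon's master's thesis \cite{Siom14}, with only a one-line remark that the two ``moreover'' clauses (jet interpolation at $\zeta$ and joint continuity in $(\zeta,t)$) follow from the proof given there. Your proposal reconstructs what that proof must be, namely a parametric version of Forstneri\v{c}'s splitting scheme: split the additive error by a cutoff, solve a $\bar\partial$-problem with a uniformly bounded solution operator on a continuously varying Stein enlargement, and run a Newton/Banach fixed-point iteration to pass from the linear Cousin decomposition to the compositional one, observing that every step can be made canonical, hence continuous and uniform in $\zeta$ (and in $t$). That is indeed the right route and, to the extent one can tell, the one taken in the cited source, so the plan is correct in outline.

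Two slips worth flagging. First, your cutoff is oriented backwards: $c_\zeta$ lives only on $C_\zeta(\mu)$, so $\chi_\zeta c_\zeta$ is undefined on $B_\zeta\setminus C_\zeta(\mu)$ where you demand $\chi_\zeta\equiv 1$. The cutoff should be supported in $C_\zeta(\mu)$, vanish near the $A$-side boundary of $C_\zeta(\mu)$ and equal $1$ near its $B$-side boundary, so that $\chi_\zeta c_\zeta$ extends by zero over $A_\zeta(2\tau)$ and $(1-\chi_\zeta)c_\zeta$ extends by zero over $B_\zeta(2\tau)$, with their sum equal to $c_\zeta$ on a neighbourhood of $C_\zeta$; the hypothesis $\mu>5\tau$ is exactly what guarantees room for this transition annulus, independent of $\zeta$. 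Second, subtracting the Taylor polynomial $P_\zeta$ of $\alpha_\zeta$ at $\zeta$ from both $\alpha_\zeta$ and $\beta_\zeta$ a posteriori preserves only the linear identity $b_\zeta-a_\zeta=c_\zeta$, not the nonlinear conclusion $\gamma_\zeta=\beta_\zeta\circ\alpha_\zeta^{-1}$. To get genuine jet interpolation you should either post-compose both $\alpha_\zeta$ and $\beta_\zeta$ on the left by $P_\zeta^{-1}$, with $P_\zeta$ the polynomial automorphism carrying the unwanted jet of $\alpha_\zeta$ at $\zeta$, or (as Forstneri\v{c} does) build the finite-dimensional jet correction directly into the linear $\bar\partial$-solution operator at every iteration, so that each approximant and hence the limit has the prescribed jet at $\zeta$.
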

We note that the two last points were not a part of the statement in \cite{Siom14}, but
they follow from the proof.

\subsection{Exposing along normal directions}

Let $\Omega\subset\mathbb C^n$ be a domain with a strictly plurisubharmonic defining function $\rho$,
and fix $p\in \partial\Omega$.  By choosing a smooth family orthogonal frames in $T_\zeta \partial\Omega$ for
$\zeta$ near $p$ we may construct a smooth family of locally injective holomorphic maps $g_\zeta(z)$ such that
$g_\zeta(\Omega)$ has a defining
function
\begin{equation}
\rho_\zeta(z)= 2\mathrm{Re}(z_1 + Q_\zeta(z)) + \mathcal L_\zeta(z) + \mathrm{h.o.t}
\end{equation}
near the origin.  Define $G_\zeta(z)=(z_1-Q_\zeta(z),z_2,...,z_n)$, so that $H_\zeta=G_\zeta\circ g_\zeta$ maps
$\partial\Omega$ to a strictly convex surface near the origin.   We may cover $\partial\Omega$ by finitely
many open sets $U_j$ such that we have such families of maps $H^j_\zeta$ defined on each $U_j$,
and they all coincide modulo a unitary change of coordinates in $\{z_1=0\}$.  Choose a small
$a>0$, let $l_a$ denote the line segment between $0$ and $a$ in the $z_1$-axis,
and let $\eta_\zeta=(H^j_\zeta)^{-1}(l_a)$.  For small $b,c>0$ we let $B(b,\zeta)$ denote
the ball $(H^j_\zeta)^{-1}(B_b(a))$, and $V_\zeta=(H^j_\zeta)^{-1}(l_a(c))$, the inverse image of the open $c$-tube around
$l_a$. \

\begin{thm}\label{normexpose}
Let $\Omega\subset\mathbb C^n$ be a domain with a strictly plurisubarmonic defining function $\rho$
and with objects defined above.  Then for $\epsilon_1,\delta_1>0$ and $a,b,c>0$ small enough there
exists a continuous map $F:\partial\Omega\times\overline\Omega\subset\mathbb C^n$ such
that the following holds.
\begin{itemize}
\item[(i)] For each $\zeta\in\partial\Omega$ the map $F_\zeta=F(\zeta,\cdot):\overline\Omega\rightarrow\mathbb C^n$
is a holomorphic embedding,
\item[(ii)] $F_\zeta(\zeta)=(H^j_\zeta)^{-1}(a,0,\cdot,0)=:q_\zeta$,
\item[(iii)] $F_\zeta(\overline\Omega\cap B_{\delta_1}(\zeta))\subset V_\zeta\cup B(b,\zeta)\cup\{q_\zeta\}$, and
\item[(iv)] $\|F(\zeta,\cdot)-\mathrm{id}\|_{\overline{\Omega}\setminus B_{\delta_1}(\zeta)}<\epsilon_1$.
\end{itemize}
Moreover, if $\overline\Omega$ is polynomially convex we may achieve that $F_\zeta\in\mathrm{Aut_{hol}}\mathbb C^n$
for each $\zeta$.
\end{thm}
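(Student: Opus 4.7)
The plan is to combine the one-parameter family of ball-exposing maps from Theorem~\ref{thm:exposing ball case} with the parametric splitting Lemma~\ref{lem:para splitting}, using the locally strictly convex normal-form charts $H^j_\zeta$ to reduce the picture near each boundary point $\zeta$ to the dumbbell geometry already understood. The point of working with $H^j_\zeta$ is that after passing through these charts, an inscribed ball inside $H^j_\zeta(\overline\Omega)$ touches the origin at $H^j_\zeta(\zeta)$, while $l_a$ together with the target ball $B_b((a,0,\ldots,0))$ lies outside $H^j_\zeta(\overline\Omega)$, producing exactly a dumbbell configuration of the type addressed by Theorem~\ref{thm:exposing ball case}. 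Because the $H^j_\zeta$ are built from a smooth family of orthogonal frames, they vary continuously with $\zeta$; the cover ambiguity (different $j$) is harmless since different choices differ only by a unitary change of coordinates in $\{z_1=0\}$ that preserves the dumbbell construction.

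First I would apply Theorem~\ref{thm:exposing ball case} in each chart to obtain, for each $\zeta$, a local holomorphic embedding $\tilde\phi^\zeta_\nu$ of the inscribed ball that pushes the origin to $(a,0,\ldots,0)$, stays uniformly close to the identity outside a small neighborhood of the origin, and maps that small neighborhood into $l_a(c)\cup B_b((a,0,\ldots,0))\cup\{(a,0,\ldots,0)\}$. Pulling back by $(H^j_\zeta)^{-1}$ and taking $\nu$ large produces a jointly continuous family of local maps $\gamma_\zeta:A_\zeta(\mu)\to\mathbb{C}^n$ (where $A_\zeta$ is as in Lemma~\ref{lem:para splitting}) with $\gamma_\zeta(\zeta)=q_\zeta$, such that conditions (ii) and (iii) of the theorem hold with respect to $V_\zeta$ and $B(b,\zeta)$, and such that on the overlap region $C_\zeta(\mu)$ the map $\gamma_\zeta$ is as close to the identity as desired, provided $\delta_1$ is taken small relative to $\delta$.

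Next I would glue $\gamma_\zeta$ to the identity on $B_\zeta$ using the parametric splitting lemma. Feeding $\gamma_\zeta$ into Lemma~\ref{lem:para splitting} produces jointly continuous families $\alpha_\zeta$ on $A_\zeta(2\tau)$ and $\beta_\zeta$ on $B_\zeta(2\tau)$, both close to the identity, with $\gamma_\zeta=\beta_\zeta\circ\alpha_\zeta^{-1}$ on $C_\zeta(\tau)$. Setting
\[
F_\zeta(z):=\begin{cases} \gamma_\zeta\circ\alpha_\zeta(z), & z\in A_\zeta, \\ \beta_\zeta(z), & z\in B_\zeta, \end{cases}
\]
the two definitions agree on the overlap and assemble into a holomorphic embedding of $\overline\Omega$ satisfying (i)--(iv): the $A_\zeta$-piece inherits the exposing behavior of $\gamma_\zeta$ (perturbed harmlessly by the small map $\alpha_\zeta$), while the $B_\zeta$-piece is a tiny perturbation of the identity. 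Joint continuity of $F$ in $(\zeta,z)$ follows from the corresponding statements for $\gamma,\alpha,\beta$, and by arranging $\alpha_\zeta$ to match the identity to a suitable order at $\zeta$ one ensures $F_\zeta(\zeta)=q_\zeta$ exactly.

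For the polynomially convex case, I would use property (vi) of Theorem~\ref{thm:exposing ball case} to conclude that the image $F_\zeta(\overline\Omega)$ is polynomially convex, and then upgrade $F_\zeta$ to a continuous family of automorphisms of $\mathbb{C}^n$ via a parametric Andersén--Lempert approximation, as in the final part of the proof of Theorem~\ref{thm:exposing manifolds}. The main obstacle I expect is the continuous dependence on $\zeta$: verifying that each of the three ingredients (the chart $H^j_\zeta$, the splitting lemma, and the parametric Andersén--Lempert step) assembles into a single continuous family over the compact manifold $\partial\Omega$. The delicate point is that Theorem~\ref{thm:exposing ball case} supplies continuity in the isotopy parameter $t$ but not manifestly in an external parameter $\zeta$; however, the explicit analytic formulas used in the construction of $\tilde\phi^\zeta_{\nu,t}$ depend on finitely many continuous choices, which is enough to upgrade to joint continuity in $(\zeta,t)$ after carefully fixing the freedom in the construction once and for all.
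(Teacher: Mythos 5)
Your approach matches the paper's proof: reduce to the dumbbell geometry via the normal-form charts $H^j_\zeta$, invoke Theorem~\ref{thm:exposing ball case} for the local exposing map, glue to the identity with the parametric splitting Lemma~\ref{lem:para splitting}, and pass to automorphisms via a parametric Anders\'en--Lempert argument using property~(vi) for polynomial convexity. The one place where you make things harder than necessary is the worry, in your final paragraph, about whether the ball-exposing maps $\tilde\phi^\zeta_{\nu,t}$ depend continuously on $\zeta$: the paper sidesteps this entirely by scaling so that all images $H^j_\zeta(\Omega)$ sit inside a \emph{single} model ball $B$, applying Theorem~\ref{thm:exposing ball case} once to that fixed ball to get a $\zeta$-independent $\phi_{\nu,t}$, and then observing that $\tilde F_\zeta=(H^j_\zeta)^{-1}\circ\phi_\nu\circ H^j_\zeta$ is automatically continuous (indeed smooth) in $\zeta$ because the charts $H^j_\zeta$ are; the $j$-independence you correctly note (unitary changes in $\{z_1=0\}$ commute with the normal-direction map) is exactly what the paper uses when saying $\phi_{\nu,t}$ are ``one variable maps.'' So there is no gap, only a more convoluted route to the same continuity statement.
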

\begin{proof}
We first define local maps achieving (i)-(iii), and then these will be approximately glued
to the identity map using Lemma \ref{lem:para splitting} above.  \

By scaling we may assume that all (local) images $H^{j}_\zeta(\Omega)$
are contained in the ball $B=\{z:2\mathrm{Re}z_1 + \|z\|^2<0\}$ of radius one
centered at the point $(-1,0,...,0)$.  Let $\phi_{\nu,t}$ be the
maps from Theorem \ref{thm:exposing ball case} defined for the ball
$B$, the line segment $l_a$, the ball of radius $b$ centred at the point $a$,
and the set $V=l_a(c)$.   Since $\phi_{\nu,t}$ are one variable maps,
the maps defined by $\tilde F_\zeta(z):=(H^j_\zeta)^{-1}\circ\phi_{\nu}\circ H^j_\zeta$
are well defined independently of $j$, and $\tilde F_\zeta$ is a locally defined map (near each $\zeta$)
achieving (i)-(iii) for $\nu$ large enough.  \

Now choose $\delta>0$ small enough, set $\gamma_\zeta=\tilde F_\zeta|_{C_\zeta(\mu)}$,
and let $\alpha_\zeta,\beta_\zeta$ be the maps from Lemma \ref{lem:para splitting}.
Then the maps $F_\zeta$ defined by $\tilde F_\zeta\circ\alpha_\zeta$ on $A_\zeta(2\tau)$
and $\beta_\zeta$ on $B_\zeta(2\tau)$ are globally defined on $\Omega$ and
will satisfy (i)-(iv) as long as all constants involved are small enough.  \

For the last part, assuming polynomial convexity, note that $F_\zeta$ is
isotopic to the identity map through holomorphic embeddings
by setting $\tilde F_{\zeta,t}:=(H^j_\zeta)^{-1}\circ\phi_{t,\nu}\circ H^j_\zeta$,
including the $t$-parameter from Theorem \ref{thm:exposing ball case}, and
invoking the last statement in Lemma \ref{lem:para splitting}.  Following the
arguments in Section 5 in \cite{Diedrich-Fornaess-Wold13} each  image
$F_{\zeta,t}(\overline\Omega)$ will be polynomially convex, and so by
the Anders\'{e}n-Lempert theorem with parameters and jet-interpolation,
the map $F_{\zeta,1}$ may be approximated by a family of holomorphic
automorphisms.   Note that in the usual statements of the Anders\'{e}n-Lempert
theorems, isotopies are required to by of class $C^2$, but it is quite simple
to work with $C^0$-isotopies instead, see \cite{AndristWold2015} for details.
\end{proof}

\subsection{Proof of Theorem \ref{thm:exposing family manifolds}}

Let $\eta_\zeta$ denote the arcs from the previous section with $a<<1$.
We leave it to the reader to convince himself/herself that the arcs $\gamma_\zeta$
may be modified so that each of them parametrises $\eta_\zeta$ over
an interval $[0,s]$, and so that $\gamma_\zeta$ is perpendicular to $H$.
Let $\psi_t:\partial\Omega\times\overline\Omega\cup(\bigcup_\zeta \{\zeta\}\times\eta_\zeta)\rightarrow\partial\Omega\times\mathbb C^n$ be a fiberpreserving smooth map, $t\in [0,1]$, that is the identity near $\partial\Omega\times\overline\Omega$,
and stretches each $\eta_\zeta$ to cover $\gamma_\zeta$.  According to \cite{KutzschebauchWold}
Section 5 we have that $\psi_t$ may be approximated by an isotopy of injective
holomorphic maps such that $\psi_1(\eta_\zeta)$ is still perpendicular to $H$, keeping
the notation $\psi_t$ for the approximation.  Let $W_\zeta$ denote $\psi_{\zeta,1}^{-1}(H)$.
Then each $W_\zeta$ is transverse to $\eta_\zeta$, and there is a $b>0$
such that the ball of radius $b$  centred at $a-b$ is tangent to $(H_\zeta^{j})^{-1}(W_\zeta)$
for all $\zeta$, where $H^j_\zeta$ is as in the previous section.  Letting
$F_\zeta$ be maps as in Theorem \ref{normexpose}, the maps
$\psi_{\zeta,1}\circ F_\zeta$ will satisfy the claims of the theorem.   Finally,
as before if $\overline\Omega$ is polynomially convex, the maps $\psi_{\zeta,1}$
may be taken to be holomorphic automorphisms.

\subsection{Proof of Theorem \ref{thm:exposing family} and Theorem \ref{thm:diff to ball}}
In this subsection,
we give the proofs of Theorem \ref{thm:exposing family} and Theorem \ref{thm:diff to ball},
which are based on Theorem \ref{thm:exposing family manifolds}.

\begin{proof}
(of Theorem \ref{thm:exposing family}) For $R>0$, let $B_R$ be the ball in $\mc^n$ centered at the origin with radius $R$,
and let $S_R=\partial B_R$ be the boundary of $B_R$.
We want to prove that there is a smooth family of curves $\gamma:\partial D\times[0,1]\rightarrow\partial D\times\mc^n$
which satisfies the conditions 1)-4) in Theorem \ref{thm:exposing family manifolds},
with $H$ replaced by $S_R$ for $R>>1$.

By assumption,
there is a smooth map $F:\partial D\times\mc^n\rightarrow\mc^n$
such that for each $\zeta\in\partial D$,
$F_\zeta(\cdot):=F(\zeta, \cdot):\mc^n\rightarrow\mc^n$ is a diffeomorphism
and $F_\zeta(D)$ is exposed at $F_\zeta(\zeta)$.

We assume that all $F_\zeta$ are orientation preserving.
For any $\zeta\in\partial D$ there is a natural isotopy
$H_\zeta:[0,1]\times \mc^n\rightarrow \mc^n$
from $F_\zeta$ to the identity,
given by $F_{\zeta,t}=F_\zeta(t, z):=\frac{1}{t}F_\zeta(tz)$ for $t\neq 0$,
and smoothly extended to $0$ by setting $F_{\zeta,0}(z)=z$.

This isotopy gives us a smooth family of vector fields $X_\zeta(t,z)$ on $\mc^n$.
In other words,
$F_{\zeta,t}$ is generated by $X_\zeta$.
It is clear that $X_\zeta(t,z)$ is even smooth jointly with respect to $\zeta, t,$ and $z$.

Let $r>0$ such that $F_{\zeta, t}(\overline D)\subset B_r(0):=\{z\in\mc^n:||z||<r\}$
for all $\zeta\in\partial D$ and $t\in [0,1]$.
Let $R>>r$ be a positive number.
Let $\psi$ be a positive smooth function on $\mc^n$ such that
$\psi\equiv 1$ on $B_{r}$ and $\psi\equiv 0$ on $\mc^n\backslash B_{R}$.
Let $X'_\zeta(t,z)=\psi X_\zeta(t,z)$,
then $X'$ has compact support.

Let $G_\zeta:\mc^n\rightarrow\mc^n$ be the diffeomorphism of $\mc^n$
generated by $X'_\zeta(t,z), t\in[0,1]$.
Then $G_\zeta=F_\zeta$ on $\overline D$ and $G_\zeta=Id$ on $\mc^n\backslash B_{R}$.

For $\zeta\in\partial D$, let $n_\zeta$ be the unit out-pointing normal vector of $\partial G_\zeta(D)$ at $G_\zeta(\zeta)$.
Then there is a unique $r_\zeta\in (0,+\infty)$ such that $G_\zeta(\zeta)+r_\zeta n_\zeta\in S_R$.
We define a smooth family of curves $\gamma:\partial D\times [0,1]\rightarrow\partial D\times\mc^n$
by setting $\gamma(\zeta, t)=(\zeta, G^{-1}_\zeta(G_\zeta(\zeta)+r_\zeta tn_\zeta))$.

Then the theorem follows by applying Theorem \ref{thm:exposing family manifolds} with $\gamma$ and $H=S_R$.
\end{proof}

\begin{proof}(of Theorem \ref{thm:diff to ball})
For $n=2$, we already assume that $\overline D$ is diffeomorphic to the closed unit ball.
For $n>2$, we want to show that $\overline D$ is diffeomorphic to the closed unit ball too.
By the Collar Theorem (see Theorem 6.1 in \cite{Hirsch}),
$\partial D$ is simply connected.
Let $z_0\in D$ and let $B$ be an open ball centered at $z_0$ with boundary $S\subset D$.
Let $W=\overline D\backslash B$,
then the triad $(W; S, \partial D)$ is an $h$-cobordism.
Note that the relative homology $H_*(W, S)=0$.
By the $h$-Cobordism Theorem (see Theorem 9.1 in \cite{Milnor65}),
$W$ is diffeomorphic to $S\times [0,1]$ and hence $\overline D$ is contractible.
So $\overline D$ is diffeomorphic the the unit ball (See Proposition A in \S 9 in \cite{Milnor65}).
By a result in differential topology (see Theorem 3.1 in \cite{Hirsch}),
there is a diffeomorphism $\sigma$ of $\mc^n$
such that $\sigma(\overline D)$ is the closed unit ball.
So the Theorem follows from Theorem \ref{thm:exposing family}.
\end{proof}

\maketitle


\begin{thebibliography}{123}

\bibitem{AndristWold2015}
Andrist, R., and Wold, E. F.;
Free dense subgroups of holomorphic automorphisms. \textit{Math. Z.} {\bf 280} (2015), no. 1-2, 335--346.

\bibitem{Deng-Guan-Zhang13}F. Deng, Q. Guan, L. Zhang, Properties of squeezing functions and global transformations of bounded domains, Tran. AMS, V. 368, no. 4, (2016) 2679-2696.
\bibitem{Diedrich-Fornaess-Wold13}K. Diederich, J.E. Forn{\ae}ss, E.F. Wold, \emph{Exposing points on the boundary of a strictly pseudoconvex or a locally convexifiable domain of finite 1-type }, Journal of Geometric Analysis, 24(4), (2014) 2124- 2134.
\bibitem{Fornaess76}J. E. Forn{\ae}ss, \emph{Embedding strictly pseudocovex domains in convex domains}, American Journal of Mathematics, Vol. 98, No. 2 (1976), 529-569.
\bibitem{Forstneric03}F. Forstneri\v{c}, Noncritical holomorphic functions on Stein manifolds,
       Acta Math. 191 (2003), 143-189.
\bibitem{Forstneric13}F. Forstneri\v{c}, Noncritical holomorphic functions on Stein spaces, e-print, arXiv:1311.1246.
\bibitem{Goluzin69}G. Goluzin, Geometric theory of functions of a complex variable, Amer. Math, Soc., Providence, R.I., 1969.
\bibitem{Hirsch}M. W. Hirsch, Differential topology, Springer, Berlin, 1976.
\bibitem{KutzschebauchWold}
Kutzschebauch, F. and Wold, E. F.; Carleman approximation by holomorphic automorphisms of $\mathbb C^n$.  To appear in Crelle.
\bibitem{Milnor65}J. Milnor, Lectures on the h-cobordism theorem, Princeton Univ. Press, 1965, Princeton, New Jersey.
\bibitem{Siom14}L. Simon, A spliting lemma for biholomoprhic maps on continuously varying domains. Master Thesis, University of Oslo, 2014.
\bibitem{Varolin01}D. Varolin, The density property for complex manifolds and geometric structures, J. Geom. Anal. 11, 135-160(2001).
\end{thebibliography}
\end{document}